\numberwithin{equation}{section}
\newtheorem{thm}{Theorem}[section]
\newtheorem{rem}{Remark}[section]
\newtheorem{cor}[thm]{Corollary}
\newtheorem{lem}{Lemma}[section]
\newtheorem{prop}[thm]{Proposition}
\theoremstyle{remark}
\newtheorem{defi}{\textbf{Definition} }[section]
\newcommand{\de}{\delta}
\newcommand{\Om}{\Omega}
\newcommand{\p}{\partial}
\title[]{
{Existence results for quasimonotone elliptic systems with growth up to critical exponents}}
\def\@settitle{%
  \begin{center}%
    \normalfont\normalsize\bfseries
    \let\MakeUppercase\relax
    \let\uppercase\relax
    \let\uppercasenonmath\relax
    \@title
  \end{center}%
}
\author[S.~Bandyopadhyay]{Shalmali Bandyopadhyay}
\address{S. Bandyopadhyay \newline 
Department of Mathematics and Statistics, The University of Tennessee at Martin, Martin, TN, USA}
\email{sbandyo5@utm.edu\\https://orcid.org/0000-0003-3971-6033}
\author[B.~B.~Delgado]{Briceyda B. Delgado}
\address{B.~B.~Delgado \newline  
INFOTEC, Centro de Investigación e Innovación en Tecnologías \\de la Información y Comunicación, Aguascalientes, Mexico}
\email{briceyda.delgado@infotec.edu.mx \\ https://orcid.org/0000-0002-1280-4650}
\author[N.~Mavinga]{Nsoki Mavinga}
 \address{N.~Mavinga \newline 
 Department of Mathematics and Statistics, Swarthmore College, Swarthmore, PA, USA}
 \email{nmaving1@swarthmore.edu\\ https://orcid.org/0000-0002-0305-6670}
 \author[M. A.~Onyido]{Maria Amarakristi Onyido}
 \address{M. A.~Onyido \newline 
 Department of Mathematics, Northern Illinois University, Dekalb, IL, USA}
 \email{monyido@niu.edu\\
 https://orcid.org/0000-0001-9675-2095}
 \thanks{Corresponding author: Briceyda B.~Delgado (briceyda.delgado@infotec.edu.mx)}
\begin{document}



\begin{abstract}
We establish the existence of weak solutions of coupled systems of elliptic partial differential equations with quasimonotone nonlinearities in the differential equation and on the boundary. Specifically, we prove the existence of minimal and maximal weak solutions between an
ordered pair of (not necessarily bounded) sub- and super-solutions. The nonlinearities in both the differential equation and on the boundary are assumed to be quasimonotone and satisfy suitable growth conditions up to the critical Sobolev exponents in the appropriate Lebesgue spaces (in duality). Our proof relies on Zorn’s lemma, Kato’s inequality, and a combination of estimates and existence results for scalar nonlinear elliptic equations with growth up to critical exponents. In addition, we provide an example to illustrate our results.\\\\

\end{abstract}
\maketitle
\noindent \textbf{Keywords:} Nonlinear elliptic systems; nonlinear boundary conditions; subsolution; supersolution; maximal and minimal weak solutions; 
{critical Sobolev exponents; Zorn's lemma; Kato's inequality.}\\
\noindent \textbf{MSC 2020:} 35J66, 35J61, 35J15, 

\section{Introduction}
\label{sec:intro}
In this paper, we investigate a class of semilinear elliptic systems with nonlinearities present both in the differential equation and on the boundary condition. In particular, we consider the following problem:

\begin{equation}
	\label{pde:system:each}
	\left\{
	\begin{array}{rcll}
		-\Delta u_1 +c_1(x)u_1&=&f_1(x,u_1,u_2),   \quad &\mbox{in}\quad \Omega\,,\\
		-\Delta u_2 +c_2(x) u_2&=&f_2(x,u_1,u_2),  \quad &\mbox{in}\quad \Omega\,,\\
		\frac{\partial u_1}{\partial \eta} &=&  g_1(x,u_1,u_2)\quad &\mbox{on}\quad \partial \Omega,\\
		\frac{\partial u_2}{\partial \eta}  &=&   g_2(x,u_1,u_2)\quad &\mbox{on}\quad \partial \Omega,
	\end{array}
	\right. 
\end{equation}
where $\Omega \subset \mathbb{R}^{N}$  ($N\ge 2$)  is a bounded domain with $C^{0,1}$ boundary, denoted by  
$\p \Om$ and $\p/\p\eta
:=\eta(x)\cdot\nabla$ denotes the outer normal derivative on the boundary.

 For each $i=1,2$, the coefficients $c_i\in L^{\infty}(\Omega)$, and $f_i: \Omega \times \mathbb{R}^2 \to \mathbb{R}$  and $g_i: \partial\Omega \times \mathbb{R}^2 \to \mathbb{R}$ are Carath\'{e}odory functions;   that is, $f_i(\cdot, u_1,u_2)$ and $g_i(\cdot, u_1,u_2)$ are measurable for all $(u_1,u_2)\in \mathbb{R}^2$, and  $f_i(x, \cdot, \cdot) $ and $g_i(x, \cdot, \cdot) $ are continuous  for a.e. $x \in \Omega$ and  a.e. $x \in \partial\Omega$, respectively. Throughout this paper, we assume that each $f_i$ and $g_i$ satisfy the following quasimonotonicity condition:
\begin{enumerate}
\item[\textbf{(Q)}] $f_i$ is quasimonotone  nondecreasing in the sense that  $f_1 (x, u_1, u_2)$ is nondecreasing in $u_2$ for all fixed $x\in \Omega, u_1\in \mathbb{R}$, and  $f_2 (x, u_1, u_2)$ is nondecreasing in $u_1$ for all  fixed $x\in \Omega, u_2 \in \mathbb{R}$, and similarly for $g_i$.
\end{enumerate}

Nonlinear elliptic problems with nonlinear boundary conditions arise naturally in applications where interactions occur both within the domain and at its boundary. These problems arise in chemical reactions \cite{lacey1998}, biological processes \cite{cantrell2006}, and ecological dynamics \cite{arrieta1999}, where species interactions or chemical reactions occur in narrow boundary layers. Finding solutions becomes challenging when the nonlinearities have less regularity and solutions may be unbounded. 

The sub-supersolution method is a fundamental tool for studying such problems. This technique was originally developed for classical solutions by Amann \cite{amann1971}, Sattinger \cite{sattinger1972}, and Pao \cite{pao1992} to establish the existence of solutions between ordered pairs of barrier functions. For problems with linear boundary conditions, this method has been extensively developed and applied to obtain both classical and weak solutions, see \cite{cuesta1997} for quasilinear problems and \cite{carl2007} for a comprehensive treatment. The analysis becomes more complex when nonlinear boundary conditions are present. For classical solutions, in \cite{amann1976, pao1992} the authors established results for monotone nonlinearities via monotone iterative methods. Recent advances have focused on weak solution frameworks for nonsmooth data, see e.g. \cite{bandyopadhyay2022,mavinga2024}.

 For the case of systems,  Mitidieri and Sweers \cite{mitidieri1994} established the existence of maximal solutions for quasimonotone elliptic systems with homogeneous Dirichlet boundary conditions using monotone iteration techniques for the monotone nonlinearities case. Cuesta Leon \cite{cuesta1997} extended this framework to weak solutions of quasimonotone $p$-Laplacian systems with Dirichlet boundary conditions, addressing both monotone and nonmonotone nonlinearities cases through monotone iterations and Zorn's lemma, respectively. Recently, Bandyopadhyay et al.\ \cite{bandyopadhyay2022} studied the existence of minimal and maximal weak solutions between ordered pairs of sub and supersolutions for elliptic scalar equations with nonlinear boundary conditions. Later, Bandyopadhyay, Lewis, and Mavinga \cite{bandyopadhyay2025} extended the results to coupled systems with linear interior equations and quasimonotone nonlinear boundary conditions, treating both monotone and nonmonotone cases. In all of the above-cited papers, the authors assumed the growth conditions in the nonlinearities are bounded away from the critical Sobolev exponents (in the dual Lebesgue spaces). 
 
  In this paper, we allow the growth in the nonlinearities to go all the way to the critical Sobolev exponents (in the appropriate dual Lebesgue spaces).
For scalar equations, Mavinga, Myers, and Nkashama \cite{mavinga2025} proved the existence of minimal and maximal weak solutions for the nonmonotone nonlinearities with growth up to the critical Sobolev exponents in the appropriate Lebesgue spaces (in duality), thereby resolving a longstanding open problem in this setting. Our goal is to extend the existence results for scalar equations in \cite{mavinga2025} to coupled quasimonotone systems, allowing the nonlinearities to exhibit growth up to the critical Sobolev exponents in the appropriate dual Lebesgue spaces. Specifically, we prove the existence of minimal and maximal weak solutions between an ordered pair of (not necessarily bounded) sub- and supersolutions. Our proof relies on existence results for scalar nonlinear elliptic equations from \cite{mavinga2025}, together with a combination of Zorn's lemma, Kato's inequality and a-priori estimates. 

Throughout this paper, we will state and prove our results for the case $N>2$. The case $N=2$ follows similar arguments with appropriate adjustments.
We will use Lebesgue spaces $L^p(\Omega)$ and $L^q(\partial\Omega)$, along with their (norm) duals spaces, as well as the Sobolev space $H^1(\Omega)$ and the product space $H^1(\Omega)\times H^1(\Omega)=(H^1(\Omega))^2$ with norm $||(u_1,u_2)||_{(H^1(\Omega))^2}=||u_1||_{H^1(\Omega)}+ ||u_2||_{H^1(\Omega)}$. We also employ the Sobolev embedding theorem for $H^1(\Omega)$ into $L^p(\Omega)$, which is continuous  for $1\le p\le 2^* $ and compact for $1\le p<2^*$, where $2^*:=\dfrac{2N}{N-2}$ is 
the critical Sobolev exponent and whose conjugate exponent is $\dfrac{2N}{N+2}$. In addition, we use the boundary trace operator from $H^1(\Omega)$ into $L^q(\partial\Omega)$, which is continuous  for $1\le q\le \dfrac{2(N-1)}{N-2}$ and compact for $1\le q< \dfrac{2(N-1)}{N-2}$ and the conjugate  exponent is $\dfrac{2(N-1)}{N}.$

Notice that
\begin{equation}\label{comp1}
1<\frac{2N}{N+2}<2<2^* \quad\text{and}\quad 1<\frac{2(N-1)}N<2<\dfrac{2(N-1)}{N-2}.
\end{equation}
These inequalities, which compare the critical Sobolev exponents with their conjugates, will play a significant role throughout the {paper}.

\begin{defi}\label{defi:weak-solution}
A pair $(u_1,u_2)\in H^1(\Omega)\times H^1(\Omega)$ is a weak solution of \eqref{pde:system:each} if
\begin{enumerate}
\item[(i)]  $f_i(\cdot, u_1(\cdot), u_2(\cdot))\in L^\frac{2N}{N+2}(\Omega)$,  $g_i(\cdot, u_1(\cdot), u_2(\cdot))\in L^\frac{2(N-1)}{N}(\partial\Omega)$, for $i=1,2$ 
\vspace{0.1 in}
\item[(ii)]
$\displaystyle{\int_{\Omega} \nabla u_i\nabla \psi +\int_{\Omega} c_i(x) u_i\psi=\int_{\Omega}f_i(x,u_1,u_2)\,\psi+ \int_{\partial\Omega} g_i(x,u_1,u_2)\,\psi\,}$	
for all $\psi\in H^1(\Omega)$, for $i=1,2$ .
\end{enumerate}
\end{defi}

\begin{defi}\label{defi:sub-super-solution}
We say that $(\overline{u}_1,\overline{u}_2)\in H^1(\Omega)\times H^1(\Omega)$ is a \textit{weak supersolution} of \eqref{pde:system:each} if
\begin{enumerate}
\item[(i)]  $f_i(\cdot, u_1(\cdot), u_2(\cdot))\in L^\frac{2N}{N+2}(\Omega)$,  $g_i(\cdot, u_1(\cdot), u_2(\cdot))\in L^\frac{2(N-1)}{N}(\partial\Omega)$, for $i=1,2$ 
\vspace{0.1in}
\item[(ii)]
$\displaystyle{\int_{\Omega} \nabla \overline{u}_i\nabla \psi +\int_{\Omega} c_i(x)\overline{u}_i\psi\geq \int_{\Omega}f_i(x,\overline{u}_1,\overline{u}_2)\,\psi+ \int_{\partial\Omega} g_i(x,\overline{u}_1,\overline{u}_2)\,\psi\,}$	for all $0\leq \psi\in H^1(\Omega)$, for $i=1,2$ .
\end{enumerate}
\end{defi}
A \textit{weak subsolution} $(\underline{u}_1,\underline{u}_2)\in H^1(\Omega)\times H^1(\Omega)$ is defined by reversing inequality (ii) above. 

For our purposes, we consider the component-wise ordering, that is,
 for  $(u_1, u_2) ,  (v_1,v_2) \in $  $H^1(\Omega)\times H^1(\Omega)$ 
\begin{equation}\label{ord1}
    (u_1, u_2) \le  (v_1,v_2)\;  \text{if}\;  u_i\le v_i, \ i=1,2.
\end{equation}

In what follows, we state our main result regarding the existence of weak solutions under the assumption \textbf{(Q)}.

\begin{thm}\label{nonmonotone} Suppose there exist a weak subsolution $(\underline{u}_1, \underline{u}_2)$ and supersolution $(\overline{u}_1, \overline{u}_2)$ of \eqref{pde:system:each} with 
$(\underline{u}_1, \underline{u}_2) \le (\overline{u}_1, \overline{u}_2),$ and the functions $f_i$ and $ g_i$ ($i=1,2$) satisfy the following growth conditions:

\begin{enumerate}
    \item[\textbf{(H1)}] There exist $K_1, K_2 \in L^{\frac{2N}{N+2}}(\Omega)$, such that 
    $$|f_i(x,u_1,u_2)| \le K_i(x) \quad \text{a.e. } x \in \Omega,$$
    whenever $\underline{u}_i(x) \le u_i \le \overline{u}_i(x)$, $i=1,2$.
    \vskip0.2cm
    \item[\textbf{(H2)}] There exist $\widetilde{K}_1, \widetilde{K}_2 \in L^{\frac{2(N-1)}{N}}(\partial\Omega)$,  such that 
    $$|g_i(x,u_1,u_2)| \le \widetilde{K}_i(x) \quad \text{a.e. } x \in \partial\Omega,$$
    whenever $\underline{u}_i(x) \le u_i \le \overline{u}_i(x)$, $i=1,2$.
\end{enumerate}

Then the following holds:

\begin{enumerate}
    \item[(i)] System \eqref{pde:system:each} has a weak solution $(u_1, u_2)$ satisfying 
    \begin{equation}
    \label{eq:order}
      (\underline{u}_1, \underline{u}_2) \le (u_1, u_2) \le (\overline{u}_1, \overline{u}_2).  
    \end{equation}

    \item[(ii)] System \eqref{pde:system:each} has a minimal weak solution $(u_{1,*}, u_{2,*})$ and a maximal weak solution $(u_1^*, u_2^*)$ satisfying \eqref{eq:order}. Moreover, for any weak solution $(u_1, u_2)$ of \eqref{pde:system:each} with $(\underline{u}_1, \underline{u}_2) \le (u_1, u_2) \le (\overline{u}_1, \overline{u}_2)$, we have
    \begin{equation}
    (u_{1,*}, u_{2,*}) \le (u_1, u_2) \le (u_1^*, u_2^*).
    \end{equation}
\end{enumerate}
\end{thm}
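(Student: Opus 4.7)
The plan is to establish part (i) through truncation combined with Schauder's fixed point theorem, and then to produce the extremal solutions in part (ii) by upgrading an upward/downward directedness property (via Kato's inequality) through Zorn's lemma, in the spirit of Cuesta Leon \cite{cuesta1997}.

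For part (i), I would introduce the pointwise truncation $T_i v := \max(\underline{u}_i, \min(v, \overline{u}_i))$ and the truncated nonlinearities $\tilde f_i(x,v_1,v_2) := f_i(x, T_1 v_1, T_2 v_2)$ and $\tilde g_i(x,v_1,v_2) := g_i(x, T_1 v_1, T_2 v_2)$, which by \textbf{(H1)}--\textbf{(H2)} are pointwise dominated by the fixed envelopes $K_i$ and $\widetilde{K}_i$ independently of $(v_1,v_2)$. For each such $(v_1,v_2)$ the linear problem
\[
-\Delta u_i + c_i u_i = \tilde f_i(x,v_1,v_2)\ \text{ in }\ \Omega, \qquad \frac{\partial u_i}{\partial \eta} = \tilde g_i(x,v_1,v_2)\ \text{ on }\ \partial\Omega,
\]
admits a unique weak solution $u_i \in H^1(\Omega)$ by Lax--Milgram (coercivity from \textbf{(C)}), defining an operator $\mathcal{T}(v_1,v_2) := (u_1,u_2)$. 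I would check continuity and compactness of $\mathcal{T}$ on $H^1(\Omega)\times H^1(\Omega)$ using Rellich--Kondrachov and the compact trace theorem together with dominated convergence (the integrability assumptions $m > \frac{2N}{N+2}$ and $n > \frac{2(N-1)}{N}$ guarantee convergence of the right-hand sides in the appropriate dual spaces). A uniform a priori bound on $\mathcal{T}(v)$ follows from the fixed $L^p$ envelopes, so Schauder's theorem yields a fixed point $(u_1,u_2)$. To see that this fixed point lies in the order interval (so that the truncation is inactive and the pair solves the original system), I would subtract the subsolution inequality for $\underline{u}_1$ from the equation for $u_1$ and test with $(\underline{u}_1 - u_1)^+ \in H^1(\Omega)$: on $\{\underline{u}_1 > u_1\}$ we have $T_1 u_1 = \underline{u}_1$ while $T_2 u_2 \ge \underline{u}_2$ everywhere, so \textbf{(Q)} yields $\tilde f_1(x,u_1,u_2) = f_1(x,\underline{u}_1, T_2 u_2) \ge f_1(x,\underline{u}_1, \underline{u}_2)$ and similarly for $g_1$; combined with \textbf{(C)}, this forces $(\underline{u}_1 - u_1)^+ \equiv 0$. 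The three companion ordering inequalities follow symmetrically.

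For part (ii), I would show that the solution set $\mathcal{S} := \{(u_1,u_2) : \text{weak solution with } (\underline{u}_1,\underline{u}_2) \le (u_1,u_2) \le (\overline{u}_1,\overline{u}_2)\}$ is \emph{upward directed}: given $u,v \in \mathcal{S}$, the pointwise maximum $w_i := \max(u_i, v_i) \in H^1(\Omega)$ is a weak subsolution. This is the decisive use of Kato's inequality: distributionally $-\Delta w_i \le \chi_{\{u_i \ge v_i\}}(-\Delta u_i) + \chi_{\{u_i < v_i\}}(-\Delta v_i)$, and on $\{u_i \ge v_i\}$ the off-diagonal component $u_j$ ($j \ne i$) satisfies $u_j \le w_j$, so \textbf{(Q)} gives $f_i(x,u_1,u_2) \le f_i(x,w_1,w_2)$ and likewise for $g_i$; the symmetric estimate on $\{u_i < v_i\}$ assembles into the subsolution property of $w$. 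Applying part (i) to the pair $(w, (\overline{u}_1,\overline{u}_2))$ then produces a solution in $\mathcal{S}$ dominating both $u$ and $v$; the argument with pointwise minima provides downward directedness. To conclude, I would verify the hypotheses of Zorn's lemma: for any chain in $\mathcal{S}$, extract a countable cofinal monotone sequence (using separability of $L^2(\Omega) \times L^2(\Omega)$ together with the $L^\infty$-type bounds from $\underline{u} \le \cdot \le \overline{u}$), deduce a uniform $H^1$ bound from \textbf{(H1)}--\textbf{(H2)} and the weak formulation, and pass to the weak limit via dominated convergence to obtain an upper bound in $\mathcal{S}$; a maximal element exists by Zorn, and upward directedness upgrades it to the maximum $(u_1^*, u_2^*)$, with the minimum $(u_{1,*}, u_{2,*})$ constructed analogously.

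The hardest step is the subsolution verification for the pointwise maximum: one must manage the off-diagonal quasimonotone patterns of $f_i$ and $g_i$ on each subset $\{u_i \ge v_i\}$ and align them with Kato's distributional splitting simultaneously in the interior equation and on the boundary. This bookkeeping is what genuinely distinguishes the coupled, nonlinear-interior-and-boundary situation here from the scalar case of \cite{mavinga2025} or the linear-interior coupled case of \cite{bandyopadhyay2025}, and it is where the quasimonotone hypothesis \textbf{(Q)} is decisively exploited to rescue the nonmonotone framework.
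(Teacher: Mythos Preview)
Your proposal is correct, and both the Kato--directedness step and the overall Zorn architecture match the paper's spirit, but the organization is genuinely different.

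For part (i) the paper does not run Schauder on the coupled truncated system. Instead, it fixes a subsolution $(\widetilde u_1,\widetilde u_2)\in J$, freezes the off-diagonal component, and invokes the \emph{scalar} existence result of \cite{mavinga2025} (Proposition~\ref{single}) separately in each equation to produce $(w_1,w_2)$ with $\widetilde u_i\le w_i\le\overline u_i$; quasimonotonicity \textbf{(Q)} then makes $(w_1,w_2)$ a subsolution of the system. This ``one scalar step up'' operation generates a set $\mathcal A$ of subsolutions carrying extra structure, and Zorn is applied to $\mathcal A$. The maximal element $(z_1,z_2)$ of $\mathcal A$ is shown to be a \emph{solution} because applying the scalar step to $(z_1,z_2)$ itself returns $(z_1,z_2)$. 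Thus the paper obtains (i) and (ii) in a single Zorn pass on subsolutions, never invoking a fixed-point theorem on the coupled problem.

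Your route separates the two parts cleanly: Schauder with truncation gives existence outright, and then Zorn runs on the solution set $\mathcal S$ itself, with directedness (max of two solutions is a subsolution, then feed into part~(i)) guaranteeing that a maximal element is actually the maximum. This is the Cuesta Leon \cite{cuesta1997} template and is arguably more transparent; it also makes the role of the strict exponents $m>\tfrac{2N}{N+2}$, $n>\tfrac{2(N-1)}{N}$ explicit (you need them for compactness in the Schauder argument). The paper's approach trades that machinery for reliance on the scalar black box, but ends up using the same integrability thresholds through Proposition~\ref{single}. Your handling of the Kato step---splitting on $\{u_i\ge v_i\}$ and using \textbf{(Q)} on the off-diagonal variable---is exactly what the paper records as Proposition~\ref{maxsol} in the appendix.
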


\begin{rem}\label{monotone}
If additional monotonicity conditions are imposed on the nonlinearities $f_i$ and $g_i$, the existence of weak solutions can be obtained via a constructive monotone 
iteration scheme, which can be useful for numerical implementation. Precisely, 

\smallskip 
 suppose there exists a weak subsolution $(\underline{u}_1, \underline{u}_2)$ and supersolution $(\overline{u}_1, \overline{u}_2)$ of \eqref{pde:system:each} with 
$(\underline{u}_1, \underline{u}_2) \le (\overline{u}_1, \overline{u}_2)$ and the functions $f_i, g_i$ ($i=1,2$) satisfy the following monotonicity conditions:

\begin{enumerate}
    \item[\textbf{(A1)}] There exist constants $\widehat{k}_i \geq 0$ such that for all $x \in \Omega$, the function $f_i(x, s_1, s_2) + \widehat{k}_i s_i$ is non-decreasing for $\underline{u}_i\le s_i \le \overline{u}_i$, $i = 1, 2$.
    
    \item[\textbf{(A2)}] There exist constants $\overline{k}_i \geq 0$ such that for all $x \in \partial\Omega$, the function $g_i(x, s_1, s_2) + \overline{k}_i s_i$ is non-decreasing for $\underline{u}_i\le s_i \le \overline{u}_i$, $i = 1, 2$.
\end{enumerate}
then system \eqref{pde:system:each} has a minimal weak solution $(u_{1,*}, u_{2,*})$ and a maximal weak solution $(u_1^*, u_2^*)$,  obtained as limits of the monotone sequences \begin{equation*}
(\underline{u}_1,\underline{u}_2) \leq (u_{1,1},u_{2,1}) \leq \cdots \leq 
(u_{1,n},u_{2,n}) \leq \cdots \leq (w_{1,n},w_{2,n}) \leq \cdots \leq 
(w_{1,1},w_{2,1}) \leq (\overline{u}_1,\overline{u}_2),
\end{equation*}
that is, $(u_{1,n},u_{2,n})\to (u_{1,*}, u_{2,*})$ and $(w_{1,n},w_{2,n}) \to (u_1^*, u_2^*)$ as $n\to \infty$, and 
\begin{align*}
 (\underline{u}_1, \underline{u}_2) \le (u_{1,*}, u_{2,*}) \le (u_1^*, u_2^*)  \le (\overline{u}_1, \overline{u}_2) 
\end{align*}
\end{rem}

The remainder of this paper is organized as follows. In Section \ref{sec:prelim}, we present preliminary results including Kato's inequality for the single equation case and  the existence result for scalar equations. Section \ref{sec:nonmonotone} deals with the proof of Theorem \ref{nonmonotone}. In Section \ref{sec:examples}, we present an application of our main result. Finally, the appendix contains essential results related to Kato-type inequality for systems, which we used in the proof of Theorem \ref{nonmonotone}.

\section{Preliminaries}
\label{sec:prelim}
In this section, we state  the existence results for  scalar equations from \cite{mavinga2025}. For completeness, we also establish Kato's inequality for single equations, showing that the maximum of two subsolutions remains a subsolution and the minimum of two supersolutions remains a supersolution.

\begin{prop}\label{single} [Existence result for a scalar elliptic problem]
Consider the nonlinear problem
\begin{align}\label{scalar}
-\Delta u + c(x)u &= f(x, u) \quad \text{in } \Omega, \nonumber\\
\frac{\partial u}{\partial \eta} &= g(x, u) \quad \text{on } \partial\Omega,
\end{align}
where $c \in L^\infty(\Omega)$, and $f: \Omega \times \mathbb{R} \to \mathbb{R}$ and $g: \partial\Omega \times \mathbb{R} \to \mathbb{R}$ are Carathéodory functions. Suppose that there exists a pair of  weak sub and supersoultion, $\underline{u}$, $\overline{u}$ such that $\underline{u} \leq \overline{u}$ in $\Omega$, and  there exists $K_1 \in L^{\frac{2N}{N+2}}(\Omega)$ and  $K_2 \in L^{\frac{2(N-1)}{N}}(\partial\Omega)$, such that $|f(x, s)| \leq K_1(x)$ a.e. $x \in \Omega$ and $|g(x, s)| \leq K_2(x)$ a.e. $x \in \partial\Omega$ for all $s \in \mathbb{R}$ satisfying $\underline{u}(x) \leq s \leq \overline{u}(x)$. Then there exists at least one weak solution $u$ of eq. \eqref{scalar} such that $\underline{u} \leq u \leq \overline{u}$.
\end{prop}

\begin{proof}
See \cite{mavinga2025} for the proof. 
\end{proof}
In the next proposition, we state Kato's inequality up to the boundary and provide a proof for completeness, since it was not included in [12], \cite[Prop.\ 2.3]{mavinga2025}.
\begin{prop}\label{prop:main_kato} [Kato's inequality for single equations]
Let $c \in L^\infty(\Omega)$. Let $u_1$ and $u_2$   be  functions in $H^1(\Om)$ such that there exist  $\widetilde{f_1}, \widetilde{f_2}\in L^{\frac{2N}{N+2}}(\Omega)$, $\widetilde{g_1}, \widetilde{g_2}\in L^{\frac{2(N-1)}{N}}(\partial\Omega)$, satisfying
\begin{equation}\label{linear:i} 
\int_{\Om}\left(\nabla u_i \nabla \psi + c(x) u_i\psi\right) \leq \int_{\Om}\widetilde{f_i}\psi+\int_{\p \Om}\widetilde{g_i}\psi\qquad \text{for all }\ 0 \le \psi \in H^1(\Om)\,,  
\end{equation}
for $i=1,2.$
Then, $u:=\max\{u_1,u_2\}$ satisfies
\begin{equation*}\label{weak:subsol}
\int_{\Om}\left(\nabla u \nabla \psi + c(x) u\psi\right) \leq \int_{\Om}f\psi+\int_{\p \Om}g\psi\qquad \text{for all }\ 0 \le \psi \in H^1(\Om)\,,
\end{equation*}
where 
$f(x):=\begin{cases}
\widetilde{f_1}(x) &\quad \text{if} \ u_1(x)>u_2(x)\\
\widetilde{f_2}(x) &\quad \text{if} \ u_1(x)\le u_2(x),
\end{cases}$
a.e. $x\in \Om $ 

and 

$g(x):=\begin{cases}
\widetilde{g_1}(x) &\quad \text{if} \ u_1(x)>u_2(x)\\
\widetilde{g_2}(x) &\quad \text{if} \ u_1(x)\le u_2(x),
\end{cases}$
a.e. $x\in \partial\Om $. 
\end{prop}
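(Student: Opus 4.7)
The plan is to approximate $\chi_{\{u_1 > u_2\}}$ by a smooth cutoff, test the two hypothesized inequalities against complementary non-negative localizations of $\psi$, and pass to the limit after discarding a non-negative cross term. First I recall that $u := \max\{u_1, u_2\} = u_2 + (u_1 - u_2)^+$ lies in $H^1(\Om)$, and Stampacchia's theorem yields $\nabla u = \nabla u_1$ a.e.\ on $\{u_1 > u_2\}$, $\nabla u = \nabla u_2$ a.e.\ on $\{u_1 < u_2\}$, and $\nabla u_1 = \nabla u_2 = \nabla u$ a.e.\ on $\{u_1 = u_2\}$. Hence $\int_\Om \nabla u \cdot \nabla \psi + c u \psi$ splits as the integrals over $\{u_1 > u_2\}$ and $\{u_1 \le u_2\}$ with $u_1$-data and $u_2$-data respectively, matching exactly the piecewise definitions of $f$ and $g$ in the conclusion.

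For $\e > 0$, let $\phi_\e(t) := \min\{t^+/\e,\,1\}$ and set $\te_\e := \phi_\e(u_1 - u_2) \in H^1(\Om) \cap L^\infty(\Om)$, so that $0 \le \te_\e \le 1$, $\te_\e \equiv 0$ on $\{u_1 \le u_2\}$, and $\te_\e \to \chi_{\{u_1 > u_2\}}$ pointwise a.e.\ in $\Om$ and a.e.\ on $\p\Om$ via the trace. For a test function $\psi \in H^1(\Om) \cap L^\infty(\Om)$ with $\psi \ge 0$, both $\te_\e \psi$ and $(1-\te_\e)\psi$ are admissible non-negative $H^1(\Om)$ test functions. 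Plugging them into \eqref{linear:i} for $i = 1$ and $i = 2$ respectively and summing, the product rule reorganizes the combined gradient contribution as
\[
\int_\Om \bigl(\te_\e \nabla u_1 + (1-\te_\e) \nabla u_2\bigr) \cdot \nabla \psi \;+\; \frac{1}{\e}\int_{\{0 < u_1 - u_2 < \e\}} \psi\,\bigl|\nabla(u_1 - u_2)\bigr|^2,
\]
where the second integral is precisely the cross term $\int_\Om \psi(\nabla u_1 - \nabla u_2) \cdot \nabla \te_\e$. Because $\phi_\e$ is non-decreasing this cross term is non-negative, and discarding it preserves the $\le$ inequality. This is the decisive algebraic step.

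Next I let $\e \to 0^+$. Dominated convergence, with dominants $(|\nabla u_1| + |\nabla u_2|)|\nabla \psi|$, $c(|u_1| + |u_2|)|\psi|$, $(|f_1| + |f_2|)|\psi|$ in $L^1(\Om)$, and $(|g_1| + |g_2|)|\psi|$ in $L^1(\p\Om)$, converts each averaged expression $\te_\e h_1 + (1-\te_\e) h_2$ into $h_1 \chi_{\{u_1 > u_2\}} + h_2 \chi_{\{u_1 \le u_2\}}$. Combined with the Stampacchia identity on the coincidence set $\{u_1 = u_2\}$ (where $\te_\e \equiv 0$, so the limit consistently picks up $u_2$-data), this delivers $\int_\Om \nabla u \cdot \nabla \psi + c u \psi \le \int_\Om f \psi + \int_{\p \Om} g \psi$ for all bounded non-negative $\psi \in H^1(\Om)$. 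To remove the boundedness assumption, I apply the inequality to the truncations $\psi_k := \min(\psi, k)$ for $0 \le \psi \in H^1(\Om)$ and send $k \to \infty$ by monotone/dominated convergence, using the Sobolev and trace embeddings dictated by the exponents on $f_i$ and $g_i$ to control the limits.

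The main technical obstacle is verifying that $\te_\e \psi$ is an admissible $H^1(\Om)$ test function: its weak gradient involves $\psi \nabla \te_\e$ whose $L^2$ bound is delicate when $\psi$ is merely $H^1$, since $\|\nabla \te_\e\|_{L^2}$ blows up like $1/\e$. Restricting initially to $\psi \in L^\infty$ makes $|\psi \nabla \te_\e| \le \e^{-1} \|\psi\|_\infty |\nabla(u_1 - u_2)| \in L^2(\Om)$, resolving the admissibility, and the subsequent truncation argument $\psi_k \nearrow \psi$ routinely promotes the result to general non-negative $\psi \in H^1(\Om)$. A minor secondary check --- that the trace of $\te_\e$ equals $\phi_\e$ applied to the trace of $u_1 - u_2$, so pointwise a.e.\ convergence on $\p\Om$ is legitimate --- is immediate from the Lipschitz property of $\phi_\e$.
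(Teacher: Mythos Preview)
Your proof is correct and follows essentially the same route as the paper's: approximate $\chi_{\{u_1>u_2\}}$ by $\theta_\e=\phi_\e(u_1-u_2)$, test the two hypotheses against $\theta_\e\psi$ and $(1-\theta_\e)\psi$, discard the nonnegative cross term $\int \psi\,\phi_\e'(u_1-u_2)\,|\nabla(u_1-u_2)|^2$, and pass to the limit by dominated convergence. The only differences are cosmetic: the paper uses a $C^1$ cutoff $\xi_n$ rather than your piecewise-linear $\phi_\e$, and you make explicit the truncation $\psi_k=\min(\psi,k)$ to pass from bounded to general nonnegative $H^1$ test functions, a step the paper leaves implicit.
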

\begin{proof}
Define 
$$
\Om_1:=\{x\in \Om: u_1(x)>u_2(x)\}\;  {\text{and}}\ 
\Om_2:=\Om \setminus \Om_1\,$$
and $$
\Gamma_1:=\{x\in \partial\Om: u_1(x)>u_2(x)\}\; {\text{and}}\; \Gamma_2:=\partial\Om\setminus \Gamma_1\,.
$$
Fix $0 \le \psi \in H^1(\Om)$.
Then, 
\begin{equation*} 
\label{eq:I}
\begin{split}
I & = \int_{\Om}\nabla u \nabla \psi + \int_{\Om}c(x) u\psi \\
& =\underbrace{\int_{\Om_1}\left(\nabla u_1 \nabla \psi + c(x)u_1\psi\right)}_{I_1}+ \underbrace{\int_{\Om_2}\left(\nabla u_2 \nabla \psi + c(x) u_2\psi\right)}_{I_2}\,.
\end{split}
\end{equation*}
Consider a sequence $\xi_n\in C^1(\mathbb{R})$ such that  $$\xi_n(t):=\begin{cases}
1 \quad \text{if} \ t\ge 1/n \\
0 \quad \text{if} \  t\le 0,
\end{cases}$$  and $\xi_n'>0$ on $(0,1/n).$
Then, define the sequence of functions
$$r_n(x):= \xi_n((u_1-u_2)(x)) \quad \quad \text{for}\;x\in \overline{\Om}\,.$$
Observe that $r_n\in H^1(\Om)$ and $r_n$ converges point-wise to $\displaystyle \chi_{\Om_1\cup \Gamma_1}$,  where  the characteristic function is defined as $\displaystyle \chi_{\Om_1\cup \Gamma_1}(x):=\begin{cases}
1 \quad \text{if} \ x\in \Om_1\cup\Gamma_1\; \\
0 \quad \text{if} \  \text{otherwise}\,.
\end{cases}$
Moreover, $||r_n||_{L^{\infty}(\Om)\cap {L^{\infty}(\partial\Om)}}\le 1$  and 
$\text{supp}(\nabla r_n)\subset \overline{D}_n$, where $D_n:=\{x\in \Om: 0<u_1(x)-u_2(x)<\frac{1}{n}\}$.
Then, using the Lebesgue dominated convergence theorem,  we have that
$$
\displaystyle I_1= \lim_{n\to \infty}\left[\int_{\Om}r_n\nabla u_1 \nabla \psi + \int_{\Om}r_nc(x)u_1\psi\right].
$$
Since $r_n\in H^1(\Om)\cap L^{\infty}(\Om)\cap {L^{\infty}(\partial\Om)}$, it follows that $r_n\psi\in H^1(\Om)$ for any test function $\psi\in H^1(\Om)\cap L^{\infty}(\Om)$.  Recalling that  $\nabla r_n=0$ on $\Om\setminus D_n$, and that $u_1$ satisfies \eqref{linear:i}, we can write
\begin{align}
 \int_{\Om}r_n\nabla u_1 \nabla \psi + r_nc(x)u_1\psi
&=\int_{\Om}\nabla u_1 \nabla (r_n \psi)+ c(x)u_1(r_n\psi) - \int_{D_n}\psi\nabla u_1 \nabla r_n \nonumber \\
&\le \int_{\Om}f_1 (r_n\psi) + \int_{\partial\Om}g_1 (r_n\psi) - \int_{D_n}\psi\nabla u_1 \nabla r_n \label{inew:f1}\,.
\end{align} 
Taking the limit as $n\to \infty$  for the first and second terms on the right-hand side of \eqref{inew:f1}, using the Lebesgue dominated convergence theorem, we get 
$$
\lim\limits_{n \to \infty}\int_{\Om}f_1 r_n\psi  = \int_{\Omega_1}f_1 \psi\; \mbox{ and }\; \lim\limits_{n \to \infty}\int_{\partial\Om}g_1 r_n\psi  = \int_{\Gamma_1}g_1 \psi\,.
$$
Likewise, for $I_2$ we have
$$
\displaystyle I_2= \lim_{n\to \infty}\left[\int_{\Om}(1-r_n)\nabla u_2 \nabla \psi + \int_{\Om}(1-r_n)c(x)u_2\psi\right],
$$
and
\begin{align}
&\int_{\Om}(1-r_n)\nabla u_2 \nabla \psi + \int_{\Om}(1-r_n)c(x)u_2\psi \nonumber\\ &\qquad =\int_{\Om} \nabla u_2 \nabla [(1-r_n )\psi]+ c(x)u_2(1-r_n)\psi + \int_{D_n}\psi\nabla u_2 \nabla r_n \nonumber\\
&\qquad\le \int_{\Om}f_2 (1-r_n)\psi+\int_{\partial\Om}g_2 (1-r_n)\psi + \int_{D_n}\psi\nabla u_2 \nabla r_n.
\label{inew:f2}
\end{align}
Taking the limit as $n\to \infty$  for the first and second terms on the right-hand side of  \eqref{inew:f2} and using the Lebesgue dominated convergence theorem, we get
$$
\lim\limits_{n \to \infty}\int_{\Om}f_2 (1-r_n)\psi  = \int_{\Omega_2}f_2 \psi\;\mbox{ and }\;\lim\limits_{n \to \infty}\int_{\partial\Om}g_2 (1-r_n)\psi  = \int_{\Gamma_2}g_2 \psi\,.
$$
Using the fact that $\nabla r_n=\xi_n'(u_1-u_2)\nabla(u_1-u_2)$, the sum of the second terms of the right-hand side of  \eqref{inew:f1} and  \eqref{inew:f2} yields 
\begin{align}
-\int_{D_n}\psi\nabla u_1 \nabla r_n + \int_{D_n}\psi\nabla u_2 \nabla r_n &= - \int_{D_n}\psi\nabla (u_1-u_2) \nabla r_n \nonumber \\
& = -  \int_{D_n}\psi\xi_n'(u_1-u_2) |\nabla (u_1-u_2) |^2\le 0,
\label{omega:n}
\end{align}
since $\psi\ge 0\; {\text{and}}\;\xi_n'\ge 0$.
Adding \eqref{inew:f1} and \eqref{inew:f2},  taking the limit, and using \eqref{omega:n}, we get 
\begin{equation*}
I=I_1+I_2\le  \int_{\Omega_1}f_1 \psi + \int_{\Omega_2}f_2 \psi+\int_{\Gamma_1}g_1 \psi + \int_{\Gamma_2}g_2 \psi= \int_{\Om} f\psi+\int_{\partial\Om} g\psi\,.
\end{equation*}
Thus, $u:=\max\{u_1, u_2\}$ satisfies 
\begin{equation*}
\int_{\Om}\left(\nabla u \nabla \psi + c(x)u\psi\right) \leq  \int_{ \Om}f\psi+\int_{\p \Om}g\psi\qquad \text{for all }\ 0 \le \psi \in H^1(\Om)\,, 
\end{equation*}
completing the proof of Proposition~\ref{prop:main_kato}.
\end{proof}

Likewise, we have a result for the minimum of two supersolutions.
\begin{cor}
\label{cor}
Let $c \in L^\infty(\Omega)$. Let $u_1$ and $u_2$   be  functions in $H^1(\Om)$ such that there exist $\widetilde{f_1}, \widetilde{f_2}\in L^{\frac{2N}{N+2}}(\Omega)$, $\widetilde{g_1}, \widetilde{g_2}\in L^{\frac{2(N-1)}{N}}(\partial\Omega)$ satisfying
\begin{equation*}\label{linear:imin} 
\int_{\Om}\left(\nabla u_i \nabla \psi + c(x)u_i\psi\right) \ge \int_{ \Om}\widetilde{f_i}\psi+  \int_{\p \Om}\widetilde{g_i}\psi\qquad \text{for all }\ 0\le \psi \in H^1(\Om)\,, 
\end{equation*}
for $i=1,2.$
Then, $u:=\min\{u_1,u_2\}$ satisfies 
\begin{equation*}\label{linear:umin} 
\int_{\Om}\left(\nabla u \nabla \psi + c(x)u\psi\right) \ge \int_{ \Om}f\psi+\int_{\p \Om}g\psi,\qquad \text{for all }\ 0 \le \psi \in H^1(\Om)\,, 
\end{equation*}
where $f(x):=\begin{cases}
\widetilde{f_1}(x) &\quad \text{if} \ u_1(x)>u_2(x)\\
\widetilde{f_2}(x) &\quad \text{if} \ u_1(x)\le u_2(x),
\end{cases}$
\quad a.e. $x\in \Om$,

and 

$g(x):=\begin{cases}
\widetilde{g_1}(x) &\quad \text{if} \ u_1(x)>u_2(x)\\
\widetilde{g_2}(x) &\quad \text{if} \ u_1(x)\le u_2(x),
\end{cases}$
\quad a.e. $x\in \partial\Om $. 
\end{cor}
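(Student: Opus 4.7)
My plan is to deduce the corollary directly from Proposition \ref{prop:main_kato} by the substitution $v_i := -u_i$. Observe that replacing $u_i$ with $-v_i$ in the reversed inequality satisfied by each $u_i$ and then multiplying through by $-1$ yields
\[
\int_{\Om}\bigl(\nabla v_i \nabla \psi + c(x) v_i \psi\bigr) \le \int_{\Om}(-f_i)\psi + \int_{\p\Om}(-g_i)\psi
\]
for every $0 \le \psi \in H^1(\Om)$ and $i=1,2$. Crucially, the coefficient $c(x)$ is unchanged and still satisfies $c \in L^\infty(\Om)$ with $c \geq 0$, so the hypotheses of Proposition \ref{prop:main_kato} are met with data $(-f_i, -g_i)$ in place of $(f_i, g_i)$. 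Hence the proposition applies to the pair $(v_1, v_2)$.

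Next I would apply Proposition \ref{prop:main_kato} to conclude that $v := \max\{v_1, v_2\}$ is a subsolution with selected data on the two regions $\{v_1 > v_2\}$ and $\{v_1 \le v_2\}$. The identity $\max\{-u_1,-u_2\} = -\min\{u_1,u_2\} = -u$ converts this into a statement about $u$, and flipping the sign once more restores the supersolution inequality. It remains to track the selection regions carefully: $\{v_1 > v_2\} = \{u_1 < u_2\}$ (where $u = u_1$) and $\{v_1 \le v_2\} = \{u_1 \ge u_2\}$ (where $u = u_2$), and similarly for the boundary, so the resulting $f$ and $g$ are pieced together from $(f_1, g_1)$ and $(f_2, g_2)$ according to which of $u_1, u_2$ realizes the minimum on each region.

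As an alternative, I would mirror the proof of Proposition \ref{prop:main_kato} step by step, splitting $\int_\Om(\nabla u \nabla \psi + c(x) u \psi)$ into integrals over $\Om_1 = \{u_1 > u_2\}$ (where $u=u_2$) and $\Om_2 = \{u_1 \le u_2\}$ (where $u=u_1$), using the same cutoff sequence $r_n = \xi_n(u_1 - u_2)$, applying the reversed weak inequalities to the admissible test functions $r_n \psi$ and $(1-r_n)\psi$, and passing to the limit via the Lebesgue Dominated Convergence Theorem.

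The one place that requires a small sign check is the analogue of \eqref{omega:n}: after collecting the two $\nabla r_n$ terms, the leftover expression becomes
\[
\int_{D_n} \psi \, \xi_n'(u_1-u_2)\, |\nabla (u_1-u_2)|^2 \ge 0,
\]
which in the supersolution setting contributes on the \emph{correct} side of the inequality (it can simply be discarded when taking the $\liminf$ of a ``$\ge$'' inequality). This is the only real obstacle, but the inequality goes the right way since $\psi \ge 0$ and $\xi_n' \ge 0$; everything else is a routine mirror of the proof of Proposition \ref{prop:main_kato}.
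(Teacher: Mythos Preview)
Your primary approach---substituting $v_i=-u_i$, invoking Proposition~\ref{prop:main_kato}, and using $\min\{u_1,u_2\}=-\max\{-u_1,-u_2\}$---is exactly the paper's proof, which consists of a single line citing this identity. Your additional bookkeeping on the selection regions and the alternative direct argument go beyond what the paper supplies but are consistent with it.
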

\begin{proof}
Using the fact that  $\min\{u_1, u_2\} = -\max\{-u_1, -u_2\}$, the proof follows from Proposition~ \ref{prop:main_kato}.
\end{proof}

\section{Proof of Theorem \ref{nonmonotone}}
\label{sec:nonmonotone}
In this section, we present the proof of Theorem \ref{nonmonotone} using the existence result for a scalar elliptic problem (Proposition \eqref{single}), and  Zorn's lemma applied to an appropriately defined set of subsolutions.  Consider the set
\begin{equation}
\label{def:J}
J=\{(u_1,u_2)\in (H^1(\Omega))^2  \colon (\underline{u}_1,\underline{u}_2)\leq (u_1,u_2)\leq (\overline{u}_1,\overline{u}_2),(u_1,u_2) \mbox{ is a subsolution of \eqref{pde:system:each}} \}.
\end{equation}
Let $(\widetilde{u}_1, \widetilde{u}_2) \in J$ be fixed. We show that there exists a subsolution of \eqref{pde:system:each}, $(w_1,w_2) \in J$, satisfying $(\widetilde{u}_1, \widetilde{u}_2) \le (w_1,w_2) \le (\overline{u}_1, \overline{u}_2)$ with $\|(w_1,w_2)\|_{(H^1(\Omega))^2} \le C$, where $C$ does not depend on $(w_1,w_2)$.

Indeed, fix $\widetilde{u}_2$  and consider the scalar equation
\begin{equation}\label{fv}
\left\{
	\begin{array}{rcll}
    -\Delta u_1 + c_1(x)u_1 + ku_1 &=& f_1(x,u_1,\widetilde{u}_2) + ku_1 & \text{in} \; \Omega;\\
    \dfrac{\partial u_1}{\partial \eta} &=& g_1(x,u_1,\widetilde{u}_2) & \text{on}\; \partial\Omega.
    \end{array}
	\right.
\end{equation}
Define $\overline{f}_1(x,s_1) := f_1(x,s_1,\widetilde{u}_2(x)) + ks_1$ and
$\overline{g}_1(x,s_1) := g_1(x,s_1,\widetilde{u}_2(x))$, where
$$k > \max\{\|c_1\|_{L^\infty(\Omega)}, \|c_2\|_{L^\infty(\Omega)}\}$$ is fixed
so that $c_i(x) + k > 0$ a.e.\ in $\Omega$ and $c_i + k \in L^\infty(\Omega)$ for $i = 1, 2$. 

Note that the norm $\|u\|_{c_i+k}^2 := \int_\Omega |\nabla u|^2 + \int_\Omega (c_i(x) + k) u^2$
is equivalent to the standard $H^1(\Omega)$ norm. For notational simplicity, we
denote $\|\cdot\|_{c_i+k}$ by $\|\cdot\|_{H^1(\Omega)}$ in what follows.
Since $(\widetilde{u}_1, \widetilde{u}_2) \in J$, we have
$\underline{u}_i \leq \widetilde{u}_i \leq \overline{u}_i$ for $i = 1,2$.
Therefore, for any $s_1$ satisfying $\underline{u}_1(x) \leq s_1 \leq \overline{u}_1(x)$,
it follows from \textbf{(H1)}  that
$$|\overline{f}_1(x,s_1)| \leq |f_1(x,s_1,\widetilde{u}_2(x))| + k|s_1| \leq K_1(x) + k\left(|\underline{u}_1(x)| + |\overline{u}_1(x)|\right).$$
Since $\underline{u}_1, \overline{u}_1 \in H^1(\Omega)$, the Sobolev embedding from $ H^1(\Omega)$ into $L^p(\Omega)$ together with \eqref{comp1} gives  $\underline{u}_1$ and $ \overline{u}_1$  are in $L^{\frac{2N}{N+2}}(\Omega)$.

Hence, 
\begin{equation}
\label{k-bound}
|\overline{f}_1(\cdot, s_1)| \leq \overline{K}_1(\cdot)\in L^{\frac{2N}{N+2}}(\Omega), 
\end{equation}
where $\overline{K}_1(\cdot):=K_1(\cdot) + k\left(|\underline{u}_1(\cdot)|
+ |\overline{u}_1(\cdot)|\right).$ 
Similarly, from \textbf{(H2)}, we have that \begin{equation}
\label{k2-bound}|\overline{g}_1(.,s_1)| = |g_1(.,s_1,\widetilde{u}_2(.))|
\leq \widetilde{K}_1(.) \in L^{\frac{2(N-1)}{N}}(\partial\Omega).
\end{equation}
By Proposition~\ref{single}, it follows that there exists
a solution of \eqref{fv},  $w_1 \in H^1(\Omega)$, such that
\begin{equation}\label{w1:subsolution}
    \widetilde{u}_1 \leq w_1 \leq \overline{u}_1.
\end{equation}
Observe that  $w_1 \in H^1(\Omega)$ is a solution of \eqref{fv} if and only if  $w_1$ is 
also a solution of
\begin{equation}\label{fv'}
\left\{
	\begin{array}{rcll}
    -\Delta w_1 + c_1(x)w_1 &=& f_1(x,w_1,\widetilde{u}_2) & \text{in} \; \Omega;\\
    \dfrac{\partial w_1}{\partial \eta} &=& g_1(x,w_1,\widetilde{u}_2) & \text{on}\; \partial\Omega,
    \end{array}
	\right.
\end{equation}

Similarly, with $\widetilde{u}_1$ fixed, consider the scalar equation
\begin{equation}\label{sv}
\left\{
	\begin{array}{rcll}
    -\Delta u_2 + c_2(x)u_2 + ku_2 &=& f_2(x,\widetilde{u}_1,u_2) + ku_2 & \text{in} \; \Omega;\\
    \dfrac{\partial u_2}{\partial \eta} &=& g_2(x,\widetilde{u}_1,u_2) & \text{on}\; \partial\Omega,
    \end{array}
	\right.
\end{equation}
where $\overline{f}_2(x,s_2) := f_2(x,\widetilde{u}_1(x),s_2) + ks_2$ and
$\overline{g}_2(x,s_2) := g_2(x,\widetilde{u}_1(x),s_2)$. By an analogous argument,
there exists a solution of \eqref{sv},
 $w_2 \in H^1(\Omega)$, such that 
 \begin{equation}\label{w2:subsolution}
 \widetilde{u}_2 \leq w_2 \leq \overline{u}_2.
 \end{equation}
Note that  $w_2$ is a solution of \eqref{sv} if and only if  $w_2$ is also a solution of
\begin{equation}\label{sv'}
\left\{
	\begin{array}{rcll}
    -\Delta w_2 + c_2(x)w_2 &=& f_2(x,\widetilde{u}_1,w_2) & \text{in} \; \Omega;\\
    \dfrac{\partial w_2}{\partial \eta} &=& g_2(x,\widetilde{u}_1,w_2) & \text{on}\; \partial\Omega.
    \end{array}
	\right.
\end{equation}

Using condition {\bf (Q)}, \eqref{w1:subsolution} and \eqref{w2:subsolution}, it follows that $$f_1(x,w_1,\widetilde{u}_2) \le f_1(x,w_1,w_2) \mbox{ and }g_1(x,w_1,\widetilde{u}_2) \le g_1(x,w_1,w_2).$$ Similarly, $$f_2(x,\widetilde{u}_1,w_2) \le f_2(x,w_1,w_2) \mbox{ and } g_2(x,\widetilde{u}_1,w_2) \le g_2(x,w_1,w_2).$$ Therefore,
\begin{equation}
\label{eq:w-sub1}
 \int_{\Omega}( \nabla w_i\nabla \psi + c_i(x)w_i\psi)\le  \int_{\Omega} f_i(x,w_1,w_2)\,\psi\, + \int_{\partial\Omega} g_i(x,w_1,w_2)\,\psi\, \quad \text{for all } \; 0\le \psi \in H^1(\Omega).   
\end{equation}
Moreover, since $\underline{u}_i \le \widetilde{u}_i \le w_i \le \overline{u}_i$, (\textbf{H1}) and (\textbf{H2}) together imply that
\begin{equation}
    \label{eq:w-sub2}
 f_i(\cdot, w_1(\cdot), w_2(\cdot))\in L^\frac{2N}{N+2}(\Omega) \mbox{ and } g_i(\cdot, w_1(\cdot), w_2(\cdot))\in L^\frac{2(N-1)}{N}(\partial\Omega) \mbox{ for } i=1,2.   
\end{equation}
From \eqref{eq:w-sub1} and \eqref{eq:w-sub2}, it follows that  $(w_1,w_2)$ is a subsolution of \eqref{pde:system:each}.

Thereafter, taking $\psi = w_1$ as a test function in the weak formulation of \eqref{fv}, 
and {using} \eqref{k-bound}-\eqref{k2-bound} we have
\begin{align}
\|w_1\|^2_{H^1(\Omega)} & = \int_\Omega|\nabla w_1|^2 + \int_\Omega(c_1(x)+k)w_1^2 
\notag\\
&= \int_\Omega\overline{f}_1(x,w_1)w_1 
+ \int_{\partial\Omega}\overline{g}_1(x,w_1,\widetilde{u}_2)w_1 \notag\\
&\leq \int_\Omega \overline{K}_1(x)w_1 
+ \int_{\partial\Omega}\widetilde{K}_1(x)w_1 \notag\\
&\leq \int_\Omega \overline{K}_1(x)\overline{u}_1 
+ \int_{\partial\Omega}\widetilde{K}_1(x)\overline{u}_1 \notag\\
&\leq \|\overline{K}_1\|_{L^{\frac{2N}{N+2}}(\Omega)}
\|\overline{u}_1\|_{L^{\frac{2N}{N-2}}(\Omega)} 
+ \|\widetilde{K}_1\|_{L^{\frac{2(N-1)}{N}}(\partial\Omega)}
\|\overline{u}_1\|_{L^{\frac{2(N-1)}{N-2}}(\partial\Omega)} \notag \\&\le  C_1 \label{eq:bound},
\end{align}
where $C_1$ is a constant depending on $(\overline{u}_i, \underline{u}_i, \Omega, 
\overline{K}_1, \widetilde{K}_1)$ but independent of $w_1$.

Similarly, $$\|w_2\|_{H^1(\Omega)} \le C_2.$$ Hence, $\|(w_1,w_2)\|_{(H^1(\Omega))^2}\le M$, where $M$ is a constant independent of $(w_1,w_2)$. 

Thereafter, define the set of subsolutions of \eqref{pde:system:each}, $\mathcal{A}$ by
$$\mathcal{A}:=\left\{ (w_1,w_2) \in (H^1(\Omega))^2 \mid (\underline{u}_1, \underline{u}_2) \le (\widetilde{u}_1, \widetilde{u}_2) \le (w_1,w_2) \le (\overline{u}_1, \overline{u}_2) \text{ for some } (\widetilde{u}_1, \widetilde{u}_2)\in J\right\},$$
where $w_1$ and $w_2$ are solutions of \eqref{fv'} and \eqref{sv'} for $\widetilde{u}_1$ and $\widetilde{u}_2$, respectively. 

To establish the existence of a maximal element of $\mathcal{A}$, we employ Zorn's Lemma on $\mathcal{A}$ (see, e.g. \cite{halmos1960naive})  as follows:

\smallskip

\noindent {\bf Step 1:} From the existence result we have that $\mathcal{A}\neq \emptyset$. Since the product space $(H^1(\Omega))^2$ is separable, we can define the chain $Y=\{ (w_{1,n},w_{2,n})\}_{n\ge 1}$ in $(\mathcal{A}, \le)$, where we assume without loss of generality (by possible reordering) that $\{(w_{1,n},w_{2,n})\}_{n\ge 1}$ is a component-wise increasing sequence. We show that $Y$ has an upper bound in $\mathcal{A}$.
Since $(w_{1,n},w_{2,n}) \in \mathcal{A}$ for every $n$, there exists a subsolution $(\widetilde{u}_{1,n},\widetilde{u}_{2,n})$ of \eqref{pde:system:each} such that
$$(\underline{u}_1, \underline{u}_2) \le (\widetilde{u}_{1,n}, \widetilde{u}_{2,n}) \le (w_{1,n},w_{2,n}) \le (\overline{u}_1, \overline{u}_2),$$
with $w_{1,n}$ and $w_{2,n}$ solutions of \eqref{fv'} and \eqref{sv'}, respectively, for the pair $(\widetilde{u}_{1,n}, \widetilde{u}_{2,n})$. From \eqref{eq:bound}, it follows that $\|(w_{1,n},w_{2,n})\|^2_{(H^1(\Omega))^2}\le M$, where $M$
is a constant independent of $n$. 

By the reflexivity of $(H^1(\Omega))^2$, there is a subsequence (relabeled), $(w_{1,n},w_{2,n})$, which converges weakly to $(w_1^*, w_2^*)\in (H^1(\Omega))^2$. Since the sequence $\{(w_{1,n},w_{2,n})\}$ is component-wise monotonically increasing and bounded above, we get 
\begin{equation*}
    \displaystyle \lim_{n \to \infty} w_{i,n}(x)=w_i^*(x) \quad \mbox{ a.e. } x\in \overline{\Omega}, \quad  i =1,2.
\end{equation*}
Using the continuity of $f_i(x,\cdot,\cdot)$ and $g_i(x,\cdot,\cdot)$, we obtain that
\begin{align*}
\lim_{n\to\infty} f_i(x,w_{1,n}(x),w_{2,n}(x)) &=f_i(x,w_1^*(x),w_2^*(x)), \quad\mbox{a.e. }x\in \Omega, \quad i =1,2.  \\
\lim_{n\to\infty} g_i(x,w_{1,n}(x),w_{2,n}(x)) &=g_i(x,w_1^*(x),w_2^*(x)), \quad\mbox{a.e. }x\in \partial\Omega, \quad i =1,2.    
\end{align*}

From conditions {\bf(H1)} and {\bf (H2)},
we get that $$|f_i(x,w_{1,n}, w_{2,n})| \le K_i(x), \quad K_i\in L^{\frac{2N}{N+2}}(\Omega),$$ and $$|g_i(x,w_{1,n}, w_{2,n})| \le \widetilde{K}_i(x), \quad \widetilde{K}_i\in L^{\frac{2(N-1)}{N}}(\partial\Omega).$$

Since $(w_{1,n},w_{2,n})\rightharpoonup(w_1^*, w_2^*)\in (H^1(\Omega))^2$, we have
\begin{equation*}
   \lim_{n\rightarrow \infty}\int_{\Omega}\nabla w_{i,n}\nabla\psi +c_i(x)w_{i,n}\psi= \int_{\Omega}\nabla w_i^*\nabla\psi + c_i(x)w_i^*\psi \quad \forall \psi \in H^1(\Omega).
\end{equation*}

Moreover, using the quasimonotonicity of $f_i$ and $g_i$, and the Lebesgue dominated convergence theorem, we have
\begin{align*}
 &\int_{\Omega}(\nabla w_1^*\nabla\psi + c_1(x)w_1^*\psi) \\
 &= \lim_{n\to\infty} \int_{\Omega}(\nabla w_{1,n}\nabla\psi + c_1(x)w_{1,n}\psi)  \\
 &= \lim_{n\to\infty} \left[\int_{\Omega}\bigl(f_1(x,w_{1,n}(x),\widetilde{u}_{2,n}(x)) w_{1,n}\bigr)\psi + \int_{\partial\Omega}g_1(x,w_{1,n}(x),\widetilde{u}_{2,n}(x))\psi\right]  \\
 &\le \lim_{n\to\infty} \left[\int_{\Omega}\bigl(f_1(x,w_{1,n}(x),w_{2,n}(x))\bigr)\psi + \int_{\partial\Omega}g_1(x,w_{1,n}(x),w_{2,n}(x))\psi\right] \\
&=\int_{\Omega}\bigl(f_1(x,w_{1}^*(x),w_{2}^*(x))\bigr)\psi + \int_{\partial\Omega}g_1(x,w_{1}^*(x),w_{2}^*(x))\psi.
\end{align*}

Similarly,
$$\int_{\Omega}(\nabla w_2^*\nabla\psi + c_2(x)w_2^*\psi)\le\int_{\Omega}\bigl(f_2(x,w_{1}^*(x),w_{2}^*(x))\bigr)\psi + \int_{\partial\Omega}g_2(x,w_{1}^*(x),w_{2}^*(x))\psi,$$
for all $0\le \psi \in H^1(\Omega)$. 
Thus, $(w_1^*, w_2^*)$ is a subsolution of \eqref{pde:system:each}.

\smallskip

\noindent{\bf Step 2:} We aim to prove existence of maximal element of $\mathcal{A}$. Indeed, let us take $(\widetilde{u}_1, \widetilde{u}_2)=(w_{1}^*, w_{2}^*)$. Then we can find a subsolution $(v_1,v_2)$ of \eqref{pde:system:each} such that $(\underline{u}_1,\underline{u}_2) \le (w_{1}^*,w_{2}^*)\le (v_1,v_2) \le (\overline{u}_1,\overline{u}_2)$, with $v_1$ and $v_2$ solutions of \eqref{fv'} and \eqref{sv'}, respectively, for the pair $(w_{1}^*, w_{2}^*)$. This implies that $(v_1,v_2)\in \mathcal{A}$ and is an upper bound of the set $Y$. By Zorn's lemma, $\mathcal{A}$ has a maximal element, $(z_1,z_2)\in\mathcal{A}$. Hence, $(z_1,z_2)$ is a subsolution of \eqref{pde:system:each}. Observe that $(z_1,z_2)\in\mathcal{A}$ implies that there exists 
$(\widetilde{z}_1,\widetilde{z}_2) \in J$, satisfying
$$(\underline{u}_1,\underline{u}_2) \le (\widetilde{z}_1,\widetilde{z}_2)\le (z_1,z_2) \le (\overline{u}_1,\overline{u}_2),$$
where $z_1$ and $z_2$ are solutions of \eqref{fv'} and \eqref{sv'}, respectively, for the pair $(\widetilde{z}_1,\widetilde{z}_2)$. Since $f_i$ and $g_i$ are quasimonotone for $i=1,2$, it follows that
\begin{align*}
    \int_{\Omega}(\nabla z_1\nabla\psi + c_1z_1\psi)&= \int_{\Omega}f_1(x,z_1,\widetilde{z}_2)\psi + \int_{\partial\Omega}g_1(x,z_1,\widetilde{z}_2)\psi \\
    &\le \int_{\Omega}f_1(x,z_1,z_2)\psi+ \int_{\partial\Omega}g_1(x,z_1,z_2)\psi,
\end{align*}
and similarly
$$\int_{\Omega}(\nabla z_2\nabla\psi + c_2z_2\psi) \le \int_{\Omega}f_2(x,z_1,z_2)\psi+ \int_{\partial\Omega}g_2(x,z_1,z_2)\psi$$
for all $0\le \psi \in H^1(\Omega)$. Therefore, $(z_1,z_2)$ is a subsolution of \eqref{pde:system:each}.
\vspace{0.1 in}

\noindent {\bf Step 3:} We claim that $(z_1,z_2)$ is a solution of \eqref{pde:system:each}. Moreover, it is a maximal solution of \eqref{pde:system:each}. Indeed, by taking $(\widetilde{u}_1, \widetilde{u}_2)=(z_1, z_2)$ and applying { Step 1}, we have that  there exists $(z_1^*,z_2^*)$ that is a subsolution of \eqref{pde:system:each} with $z_i\leq z_i^* \leq \overline{u}_i,\ i=1,2$, and $z_1^*$ and $z_2^*$ solutions of \eqref{fv'} and \eqref{sv'}, respectively, for the pair $(z_1,z_2)$.
From the definition of the set $\mathcal{A}$, we have that $(z_1^*,z_2^*) \in \mathcal{A}.$ 
Using the fact that $(z_1, z_2)$ is a maximal element of $\mathcal{A}$, we get that $z_i^* \leq z_i$ for $i=1,2$. Hence, $(z_1,z_2)=(z_1^*, z_2^*)$. Therefore, for every $\psi \in H^1(\Omega)$,
$$\int_{\Omega}(\nabla z_1\nabla\psi + c_1z_1\psi)= \int_{\Omega}(\nabla z_1^*\nabla\psi + c_1z_1^*\psi) =\int_{\Omega}f_1(x,z_1,z_2)\psi +\int_{\partial\Omega}g_1(x,z_1,z_2)\psi,$$
and
$$\int_{\Omega}(\nabla z_2\nabla\psi + c_2z_2\psi)=\int_{\Omega}(\nabla z_2^*\nabla\psi + c_2z_2^*\psi) =\int_{\Omega}f_2(x,z_1,z_2)\psi +\int_{\partial\Omega}g_2(x,z_1,z_2)\psi.$$
Thus, $(z_1,z_2)$ is a solution of \eqref{pde:system:each} and $(\underline{u}_1,\underline{u}_2) \le (z_1,z_2) \le (\overline{u}_1,\overline{u}_2)$.

Additionally, let $(u_1,u_2)$ be any solution of \eqref{pde:system:each} such that   $(\underline{u}_1, \underline{u}_2)\leq (u_1,u_2) \leq (\overline{u}_1,\overline{u}_2)$.
Since $(u_1, u_2)$ is a solution of \eqref{pde:system:each}, it is in particular a subsolution of \eqref{pde:system:each}; similarly, $(z_1, z_2)$ is a solution and hence a subsolution of \eqref{pde:system:each}. Therefore, by Proposition \ref{maxsol}, $(v_1,v_2)$ with $v_1=\max\{u_1,z_1\}$ and $v_2=\max\{u_2,z_2\}$ is a subsolution of \eqref{pde:system:each} and by {Step 1}, taking $(\widetilde{u}_1, \widetilde{u}_2) = (v_1, v_2)$, there exists $(\widehat{z}_1, \widehat{z}_2) \in J $ such that
\[
\underline{u}_1 \leq v_1 \leq \widehat{z}_1 \leq \overline{u}_1  \quad \mbox{ and } \quad \widehat{z}_1 \text{ is a solution of \eqref{fv'}} ,
\]
\[
\underline{u}_2 \leq v_2 \leq \widehat{z}_2 \leq \overline{u}_2 \quad \mbox{ and } \quad \widehat{z}_2 \text{ is a solution of \eqref{sv'}}.
\]
Hence $(\widehat{z}_1, \widehat{z}_2) \in \mathcal{A}$.
Since $(z_1, z_2)$ is a maximal element of $\mathcal{A}$,
\begin{equation}
\label{Eq1}    
(\widehat{z}_1, \widehat{z}_2) \leq (z_1, z_2).
\end{equation}
Therefore, $(v_1, v_2) \leq (\widehat{z}_1, \widehat{z}_2) \leq (z_1, z_2)$.

On the other hand $v_1 = \max\{u_1, z_1\} \geq z_1$ and $v_2 = \max\{u_2, z_2\} \geq z_2$, we have
\begin{equation}
    \label{Eq2}
    (v_1, v_2) \geq (z_1, z_2).
\end{equation}
Combining \eqref{Eq1} and \eqref{Eq2},
\[
z_1 = v_1 = \max\{u_1, z_1\} \quad \text{and} \quad z_2 = v_2 = \max\{u_2, z_2\},
\]
which gives $(u_1, u_2) \leq (z_1, z_2)$.
Hence, $(z_1,z_2)$ is a maximal solution of \eqref{pde:system:each}. By a similar approach using Proposition \ref{minsol}, we can show the existence of a minimal solution of \eqref{pde:system:each}.
\qed

\section{Example}
\label{sec:examples}
In this section, we illustrate an example involving nonlinear reaction terms that satisfy conditions {\bf (Q)},  {\bf (H1)}, and {\bf (H2)}.  Consider the eigenvalue problems, $i=1,2$
\begin{equation}
	\label{Stek-Compact21}
	\left\{
	\begin{array}{rcll}
		-\Delta \varphi_i +c_i(x)\varphi_i&=&\mu_i  \varphi_i,   \quad &\mbox{in}\quad \Omega\,,\\
		\dfrac{\partial \varphi_i}{\partial \eta} &=&  \mu_i  \varphi_i\quad &\mbox{on}\quad \partial \Omega
	\end{array}
	\right. 
\end{equation}
where $c_i\in L^{\infty}(\Omega)$  and $c_i>0$ a.e. The existence of the first eigenvalue $\mu_i>0$ and the corresponding positive eigenfunction $0<\varphi_i \in H^1(\Omega)$, $i=1,2$ is well known (see \cite[Theorem 2.2]{amann1976} and \cite[Proposition 2.3]{mavinga2012}).

\begin{thm}\label{example}
Consider the system
\begin{equation}\label{special example} 
\left\{ \begin{array}{rcll}
-\Delta  u_1 +c_1(x)u_1 &=&\lambda f_1(u_2) & \quad \mbox{{\rm in} }\quad  \Om\,;  \\
-\Delta  u_2 +c_2(x)u_2 &=&\lambda f_2(u_1) & \quad \mbox{{\rm in} }\quad  \Om\,;  \\
\dfrac{\p   u_1}{\p \eta}&= &  \lambda g_1(u_2) &  \quad \mbox{{\rm on} } \quad \p\Om\,;\\
\dfrac{\p   u_2}{\p \eta}&= &  \lambda g_2(u_1) &  \quad \mbox{{\rm on} } \quad \p\Om\,,
\end{array}\right.
\end{equation}
where $\lambda>0$ is a parameter and the functions $c_i\in L^{\infty}(\Omega)$ and $c_i>0$ a.e., where the nonlinearities $f_i, g_i:[0, \infty) \to [0, \infty)$ are nondecreasing differentiable functions satisfying $f_i(0)=0 = g_i(0)$, $f_i'(0)>0$, $g_i'(0)>0$, and $\lim\limits_{s \to \infty}\frac{f_i(s)}{s}=0=\lim\limits_{s \to \infty}\frac{g_i(s)}{s}$, $i=1,2$. In addition, suppose that there exist $C_1, C_2>0$ such that $C_2\varphi_2\leq \varphi_1\leq C_1\varphi_2$ in $\overline{\Omega}$.
For $\lambda > \max\left\{\frac{\mu_1C_1}{ab} , \frac{\mu_2}{abC_2}\right\}$, where $a= \min\left\{\sqrt{f_1'(0)},\sqrt{g_1'(0)}\right\}$ and $b= \min\left\{\sqrt{f_2'(0)},\sqrt{g_2'(0)}\right\}$, system \eqref{special example} admits a positive subsolution $(\underline{u}_1,\underline{u}_2)$ and supersolution $(\overline{u}_1, \overline{u}_2)$ satisfying
$$(\underline{u}_1,\underline{u}_2) \le (\overline{u}_1, \overline{u}_2),$$
where $\mu_i, \varphi_i$ are the first eigenvalue and eigenfunction of \eqref{Stek-Compact21}, $i=1,2$, respectively.
\end{thm}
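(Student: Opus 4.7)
My strategy is to choose both barriers as explicit scalar multiples of the principal eigenfunctions $\varphi_1,\varphi_2$ of the Steklov-type problem \eqref{Stek-Compact21}; these are strictly positive on the compact set $\overline{\Om}$ and therefore bounded above and below by positive constants. The subsolution will be a small multiple of $(\varphi_1,\varphi_2)$ exploiting the first-order behavior of $f_i,g_i$ at zero, while the supersolution will be a large multiple exploiting the sublinear decay of $f_i(s)/s$ and $g_i(s)/s$ at infinity. The double pinching $C_2\varphi_2\le\varphi_1\le C_1\varphi_2$ is the mechanism that couples the two equations, and it is precisely where the constants $C_1,C_2,a,b$ in the hypothesis enter.

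For the subsolution I set $(\underline{u}_1,\underline{u}_2):=(\eps a\varphi_1,\eps b\varphi_2)$ with $\eps>0$ small. Since $(\varphi_i,\mu_i)$ solves \eqref{Stek-Compact21}, it suffices to verify the pointwise inequalities
\[
\eps a\mu_1\varphi_1\le \la f_1(\eps b\varphi_2)\text{ in }\Om,\qquad \eps a\mu_1\varphi_1\le\la g_1(\eps b\varphi_2)\text{ on }\p\Om,
\]
together with the analogues for the second equation obtained by swapping the indices. Differentiability of $f_1,g_1$ at $0$ with $f_1(0)=g_1(0)=0$ gives, for any $\delta>0$ and for $\eps$ small enough, uniform lower bounds $f_1(\eps b\varphi_2)\ge(f_1'(0)-\delta)\eps b\varphi_2$ and $g_1(\eps b\varphi_2)\ge(g_1'(0)-\delta)\eps b\varphi_2$. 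Combining with $\varphi_1\le C_1\varphi_2$ and noting that $\min\{f_1'(0),g_1'(0)\}\ge a^2$, both first-equation inequalities reduce to
\[
a\mu_1 C_1\le \la b(a^2-\delta)\ \Longleftrightarrow\ \la\ge\frac{\mu_1 C_1}{ab}\cdot\frac{a^2}{a^2-\delta},
\]
which holds for $\delta$ small because $\la>\mu_1 C_1/(ab)$ strictly. The symmetric reduction for the second equation, using $\varphi_2\le\varphi_1/C_2$ and $\min\{f_2'(0),g_2'(0)\}\ge b^2$, is implied by $\la>\mu_2/(abC_2)$.

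For the supersolution I set $(\overline{u}_1,\overline{u}_2):=(M\varphi_1,M\varphi_2)$ with $M$ large. Given $\la$, pick $\eta_1:=\mu_1 C_2/\la$ and $\eta_2:=\mu_2/(\la C_1)$, and use the sublinearity hypothesis to find $s_0>0$ with $f_1(s),g_1(s)\le\eta_1 s$ and $f_2(s),g_2(s)\le\eta_2 s$ for all $s\ge s_0$. Taking $M$ so large that $M\min_{\overline{\Om}}\min\{\varphi_1,\varphi_2\}\ge s_0$, the estimate $\la f_1(M\varphi_2)\le\la\eta_1 M\varphi_2= M\mu_1 C_2\varphi_2\le M\mu_1\varphi_1$ follows from $C_2\varphi_2\le\varphi_1$, and this is the first supersolution inequality. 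The remaining three are structurally identical: the inequalities involving $g_1$ are the same computation on $\p\Om$, while those for the second equation use $\varphi_1\le C_1\varphi_2$ in place of $C_2\varphi_2\le\varphi_1$. A final enlargement of $M$ so that $M\ge\eps\max\{a,b\}$ secures the componentwise ordering $(\underline{u}_1,\underline{u}_2)\le(\overline{u}_1,\overline{u}_2)$. The most delicate step is the subsolution analysis, where the strict inequality on $\la$ must absorb the Taylor remainder $\delta$; this is precisely why $a,b$ appear as the square roots of the \emph{smaller} derivatives at zero rather than arbitrary positive constants, and why the hypothesis on $\la$ cannot be relaxed to an equality.
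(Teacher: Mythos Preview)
Your subsolution argument is essentially the paper's: both take $(\underline{u}_1,\underline{u}_2)=(\eps a\varphi_1,\eps b\varphi_2)$ and reduce the four pointwise inequalities, via the pinching $C_2\varphi_2\le\varphi_1\le C_1\varphi_2$, to the strict condition on $\la$ absorbing a first-order remainder. The paper phrases this through auxiliary functions $\xi_i(s)=\mu_1 C_1\frac{a}{b}s-\la f_1(s)$, etc., with $\xi_i(0)=0$ and $\xi_i'(0)<0$, which is your Taylor-remainder step repackaged.

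Your supersolution, however, is a genuinely different construction. The paper does \emph{not} use $(M\varphi_1,M\varphi_2)$; instead it introduces the torsion-type functions $\phi_i$ solving $-\Delta\phi_i+c_i\phi_i=1$ in $\Om$, $\partial\phi_i/\partial\eta=1$ on $\p\Om$, normalizes $\phi_i^*:=\phi_i/\|\phi_i\|_\infty$, and sets $(\overline{u}_1,\overline{u}_2)=(\widetilde M\phi_1^*,\widetilde M\phi_2^*)$. The cross-coupling is then handled via $\|\phi_1^*\|_\infty=\|\phi_2^*\|_\infty=1$ rather than the $C_1,C_2$ pinching. Your route stays entirely within the eigenpair $(\mu_i,\varphi_i)$ and reuses the pinching hypothesis a second time, which is cleaner and avoids introducing the auxiliary problem \eqref{Stek22}; the paper's route has the mild advantage that its supersolution step does not need the comparability of $\varphi_1$ and $\varphi_2$ at all. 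Both are correct and lead to the same ordered pair conclusion; your version relies on $\min_{\overline\Om}\varphi_i>0$, which is the same strict-positivity ingredient the paper invokes for $\phi_i^*$ when ordering the sub- and supersolution.
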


\begin{proof} The proof proceeds through construction of an ordered pair of subsolution and supersolution.

\noindent {\bf Step 1.} In this step, we construct a subsolution. Without loss of generality, we consider $i=1$.
For $\epsilon$ sufficiently small, let $(\underline{u}_1,\underline{u}_2)=(a\varepsilon\varphi_1, b\varepsilon\varphi_2)$, 

Define 
\begin{align*}
\xi_1(s) = \mu_1 C_1 \frac{a}{b} s- \lambda f_1(s),  \qquad \widetilde{\xi}_1(s) = \mu_1 C_1 \frac{a}{b} s - \lambda g_1(s),\\
\xi_2(s) = \frac{\mu_2}{C_2} \frac{b}{a} s- \lambda f_2(s), \qquad \widetilde{\xi}_2(s) = \frac{\mu_2}{C_2} \frac{b}{a} s - \lambda g_2(s),
\end{align*}

 It follows that $\xi_1(0) = 0=\widetilde{\xi}_1(0)$. Moreover, since $\lambda > \max\left\{\frac{\mu_1C_1}{ab} , \frac{\mu_2}{abC_2}\right\}$, we obtain
\begin{align*}
     \xi_1^{'}(0) = \mu_1C_1 \frac{a}{b} - \lambda f_1^{'}(0) <\lambda ab\frac{a}{b}- \lambda f_1'(0)\leq \lambda f_1'(0)-\lambda f_1'(0) = 0,
\end{align*}
and 
\begin{align*}
     \widetilde{\xi'}_1(0) = \mu_1C_1 \frac{a}{b} - \lambda g_1^{'}(0) <\lambda ab\frac{a}{b}  -\lambda g_1'(0) \leq\lambda g_1'(0)-\lambda g_1'(0)= 0.
\end{align*}

Consequently, we can find $0<\delta \ll 1$ such that $\xi_1(s)<0$ and $\widetilde{\xi}_1(s)<0$ for $s\in (0,\delta)$. Choose $0<\varepsilon\ll 1$ sufficiently small so that $a\varepsilon\varphi_1, ~b\varepsilon\varphi_2 \in (0,\delta)$.

Using \eqref{Stek-Compact21} and the above inequalities, we have for all $0\leq \psi\in H^1(\Omega)$:
\begin{align*}
   \int_{\Omega} \nabla (a\varepsilon\varphi_1)\nabla \psi +\int_{\Omega} (a\varepsilon\varphi_1)c_1\psi&=\mu_1\int_{\Omega}(a\varepsilon\varphi_1)\,\psi+ \mu_1\int_{\partial\Omega} (a\varepsilon\varphi_1)\,\psi\,\\
   &\leq \mu_1C_1\int_{\Omega}(a\varepsilon\varphi_2)\,\psi+ \mu_1C_1\int_{\partial\Omega} (a\varepsilon\varphi_2)\,\psi\,\\
&\leq\lambda\int_{\Omega}f_1(b\varepsilon\varphi_2)\,\psi+ \lambda\int_{\partial\Omega} g_1(b\varepsilon\varphi_2)\psi.
\end{align*}
A similar argument establishes that 
\begin{align*}
    \int_{\Omega} \nabla (b\varepsilon\varphi_2)\nabla \psi +\int_{\Omega} (b\varepsilon\varphi_2)c_2\,\psi
   &\le \lambda\int_{\Omega}f_2(a\varepsilon\varphi_1)\,\psi+ \lambda\int_{\partial\Omega} g_2(a\varepsilon\varphi_1)\,\psi.
\end{align*}
Therefore, $(\underline{u}_1,\underline{u}_2) = (a\varepsilon\varphi_1, b\varepsilon\varphi_2)$ is a weak subsolution of \eqref{special example}.

\noindent \textbf{Step 2.} 
For $\widetilde M\gg 1$, we demonstrate that $(\overline{u}_1, \overline{u}_2) = (\widetilde M\phi_1^*, \widetilde M\phi_2^*)$ is a supersolution of \eqref{special example}, where $\phi_i^*:=\frac{\phi_i}{\|\phi_i\|_{\infty}} \in C^{2,\alpha}(\Omega)\cap C^{1,\alpha}(\overline\Omega)$, $i=1,2$, and $\phi_i$ is the unique positive solution of 
 \begin{equation}
 	\label{Stek22}
 	\left\{
 	\begin{array}{rcll}
 		-\Delta \phi_i +c_i(x)\phi_i&=&1,   \quad &\mbox{in}\quad \Omega\,,\\
 		\dfrac{\partial \phi_i}{\partial \eta} &=& 1\quad &\mbox{on}\quad \partial \Omega.
 	\end{array}
 	\right. 
 \end{equation}

Denote $\overline{f}_i(t)=\underset{ s\in [0,t]}{\max}f_i(s)$ and $\overline{g}_i(t)=\underset{ s\in [0,t]}{\max}g_i(s)$. Since $\lim\limits_{s \to \infty}\frac{f_i(s)}{s}=0=\lim\limits_{s \to \infty}\frac{g_i(s)}{s}$, it follows that 
$$\underset{t\to\infty}{\lim}\frac{\overline{f}_i(t)}{t}=0=\underset{t\to\infty}{\lim}\frac{\overline{g}_i(t)}{t}.$$

Hence, for $i=1,2$ and $\widetilde M\gg 1$ sufficiently large, we have
\begin{equation}
\label{sup2}    \frac{\overline{f}_i(\widetilde M\|\phi_i^*\|_{\infty})}{\widetilde M\|\phi_i^*\|_{\infty}} < \frac{1}{\lambda\|\phi_i\|_{\infty}} \quad \text{and} \quad \frac{\overline{g}_i(\widetilde M\|\phi_i^*\|_{\infty})}{\widetilde M\|\phi_i^*\|_{\infty}} < \frac{1}{\lambda\|\phi_i\|_{\infty}}.
\end{equation}
Using  \eqref{Stek22}, \eqref{sup2}, and the fact that  $\phi_i^*:=\frac{\phi_i}{\|\phi_i\|_{\infty}}$, we obtain
\begin{align*}
   \int_{\Omega} \nabla \widetilde M\phi_1^*\nabla \psi +\int_{\Omega} c_1\widetilde M\phi_1^*\psi&=\int_{\Omega} \frac{\widetilde M}{\|\phi_1\|_{\infty}} \,\psi+ \int_{\partial\Omega} \frac{\widetilde M}{\|\phi_1\|_{\infty}}\,\psi\,\\
    &\ge \lambda\int_{\Omega} \overline{f}_1(\widetilde M\|\phi_1^*\|_{\infty})\,\psi+ \lambda\int_{\partial\Omega} \overline{g}_1(\widetilde M\|\phi_1^*\|_{\infty})\,\psi\,\\
    &= \lambda\int_{\Omega} \overline{f}_1(\widetilde M\|\phi_2^*\|_{\infty})\,\psi+ \lambda\int_{\partial\Omega} \overline{g}_1(\widetilde M\|\phi_2^*\|_{\infty})\,\psi\,\\
    &\ge \lambda\int_{\Omega}f_1(\widetilde M\phi_2^*)\,\psi+ \lambda\int_{\partial\Omega} g_1(\widetilde M\phi_2^*)\,\psi.
\end{align*}
Similarly, 
\begin{align*}
   \int_{\Omega} \nabla \widetilde M\phi_2^*\nabla \psi +\int_{\Omega} c_2\widetilde M\phi_2^*\psi \geq\lambda\int_{\Omega}f_2(\widetilde M\phi_1^*)\,\psi+ \lambda\int_{\partial\Omega} g_2(\widetilde M\phi_1^*)\,\psi.
\end{align*}
Thus, $(\overline{u}_1, \overline{u}_2) = (\widetilde M\phi_1^*, \widetilde M\phi_2^*)$ is a weak supersolution of \eqref{special example}. 

\noindent 
Finally, to show that the sub and supersolution is ordered, choose $\widetilde M > \max \left\{ \dfrac{a\epsilon \varphi_1}{\displaystyle\min_{x \in \Omega} \phi_1^*}, \dfrac{b\epsilon \varphi_2}{\displaystyle\min_{x \in \Omega} \phi_2^*}\right\}$ such that $a\varepsilon\varphi_1 \le  \widetilde M\phi_1^*$ and $b\varepsilon\varphi_2 \le  \widetilde M\phi_2^*$, which yields 
$$(\underline{u}_1,\underline{u}_2) = (a\varepsilon\varphi_1, b\varepsilon\varphi_2) \le (\widetilde M\phi_1^*, \widetilde M\phi_2^*)=(\overline{u}_1, \overline{u}_2).$$
Therefore, by Theorem 1.1, there exists a positive solution $(u_1, u_2)$ to system \eqref{special example} satisfying $(0,0)< (\underline{u}_1,\underline{u}_2) \leq (u_1, u_2) \leq (\overline{u}_1, \overline{u}_2)$. 
\end{proof}

\section*{Appendix}
\label{sec:appdx}
{In this section, we prove that the maximum of two subsolutions of \eqref{pde:system:each} remains a subsolution of \eqref{pde:system:each} and the minimum of two supersolutions of \eqref{pde:system:each} remains a supersolution of \eqref{pde:system:each}. This extends Kato's inequality for single equations proved in Section \ref{sec:prelim} (see Proposition \ref{prop:main_kato} and Corollary \ref{cor}) to quasimonotone systems.}

\begin{lem}
\label{lem:5.1}
Let $(\alpha_1, \alpha_2)$ and $(\beta_1, \beta_2) \in \mathcal{A}$ be any two subsolutions of \eqref{pde:system:each}. Then the pair $(\max\{\alpha_1, \beta_1\}, \max\{\alpha_2, \beta_2\})$ is a subsolution of \eqref{pde:system:each}.
\end{lem}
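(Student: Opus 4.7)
The plan is to reduce the system statement to the single--equation Kato inequality (Proposition \ref{prop:main_kato}) and then upgrade the piecewise right--hand side using the quasimonotonicity hypothesis \textbf{(Q)}. Write $u_1:=\max\{\alpha_1,\beta_1\}$ and $u_2:=\max\{\alpha_2,\beta_2\}$. Because $(\alpha_1,\alpha_2)$ and $(\beta_1,\beta_2)$ are subsolutions, the components $\alpha_1$ and $\beta_1$ individually satisfy, for every $0\le\psi\in H^1(\Omega)$,
\[
\int_{\Omega}\!\bigl(\nabla\alpha_1\cdot\nabla\psi+c_1\alpha_1\psi\bigr)\le\int_{\Omega}\!f_1(x,\alpha_1,\alpha_2)\psi+\int_{\partial\Omega}\!g_1(x,\alpha_1,\alpha_2)\psi,
\]
and the analogous inequality with $\alpha$ replaced by $\beta$. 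Thus $\alpha_1,\beta_1$ are subsolutions of scalar problems whose data $f_1(\cdot,\alpha_1,\alpha_2)$, $f_1(\cdot,\beta_1,\beta_2)$ (and the corresponding $g_1$'s) lie in the required $L^p$ spaces since $(\alpha_1,\alpha_2),(\beta_1,\beta_2)\in\mathcal A$.

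Applying Proposition \ref{prop:main_kato} to this scalar situation yields
\[
\int_{\Omega}\!\bigl(\nabla u_1\cdot\nabla\psi+c_1 u_1\psi\bigr)\le\int_{\Omega}\!F_1\psi+\int_{\partial\Omega}\!G_1\psi,
\]
where $F_1(x)=f_1(x,\alpha_1,\alpha_2)$ on $\{\alpha_1>\beta_1\}$ and $F_1(x)=f_1(x,\beta_1,\beta_2)$ on $\{\alpha_1\le\beta_1\}$, and similarly for $G_1$ on $\partial\Omega$. The key step is now to dominate $F_1$ pointwise by $f_1(x,u_1,u_2)$. On $\{\alpha_1>\beta_1\}$ we have $u_1=\alpha_1$ and $u_2\ge\alpha_2$; by \textbf{(Q)}, $f_1$ is nondecreasing in its last argument, so
\[
F_1(x)=f_1(x,\alpha_1,\alpha_2)\le f_1(x,\alpha_1,u_2)=f_1(x,u_1,u_2).
\]
On $\{\alpha_1\le\beta_1\}$ the same argument with $\beta$ in place of $\alpha$ gives $F_1(x)=f_1(x,\beta_1,\beta_2)\le f_1(x,u_1,u_2)$. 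An identical argument on $\partial\Omega$ shows $G_1(x)\le g_1(x,u_1,u_2)$. Since $\psi\ge 0$, integrating gives
\[
\int_{\Omega}\!\bigl(\nabla u_1\cdot\nabla\psi+c_1 u_1\psi\bigr)\le\int_{\Omega}\!f_1(x,u_1,u_2)\psi+\int_{\partial\Omega}\!g_1(x,u_1,u_2)\psi,
\]
for all $0\le\psi\in H^1(\Omega)$. The second component is handled by the symmetric argument: Proposition \ref{prop:main_kato} applied to $\alpha_2,\beta_2$ followed by quasimonotonicity of $f_2,g_2$ in their second argument. Integrability of $f_i(\cdot,u_1,u_2)\in L^{2N/(N+2)}(\Omega)$ and $g_i(\cdot,u_1,u_2)\in L^{2(N-1)/N}(\partial\Omega)$ follows from conditions \textbf{(H1)}--\textbf{(H2)} since $\underline u_i\le u_i\le\overline u_i$.

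The main (and only nontrivial) obstacle is the bookkeeping at the interface between the two regions $\{\alpha_i>\beta_i\}$ and its complement: the Kato inequality produces a piecewise right--hand side keyed to the sign of $\alpha_1-\beta_1$, while the target right--hand side $f_1(x,u_1,u_2)$ is keyed, in its second slot, to the sign of $\alpha_2-\beta_2$. It is precisely condition \textbf{(Q)}, which imposes monotonicity in the \emph{off--diagonal} variable, that bridges this mismatch and lets both sub-region bounds be absorbed into a single expression evaluated at $(u_1,u_2)$.
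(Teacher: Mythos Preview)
Your proof is correct and follows essentially the same approach as the paper: reduce to the scalar Kato inequality (Proposition~\ref{prop:main_kato}) and use quasimonotonicity~\textbf{(Q)} to pass from the piecewise right-hand side to $f_i(x,u_1,u_2)$. The only cosmetic difference is the order of the two steps---the paper first uses~\textbf{(Q)} to replace the off-diagonal arguments $\alpha_2,\beta_2$ by $\gamma_2=\max\{\alpha_2,\beta_2\}$ and then invokes Proposition~\ref{prop:main_kato}, so that the resulting piecewise function $h_1$ equals $f_1(x,\gamma_1,\gamma_2)$ exactly, whereas you apply Kato first and then bound the piecewise $F_1$ above by $f_1(x,u_1,u_2)$ via~\textbf{(Q)}; the content is the same.
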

\begin{proof}
Since $(\alpha_1, \alpha_2)$ and $(\beta_1, \beta_2)$ belong to $\mathcal{A}$, there exist corresponding functions such that each pair satisfies the subsolution conditions. Specifically, 
For $(\alpha_1, \alpha_2)$:
\begin{align*}
\int_\Omega \nabla \alpha_1 \nabla \psi + \int_\Omega c_1(x)\alpha_1 \psi &\leq \int_\Omega f_1(x, \alpha_1, \alpha_2) \psi + \int_{\partial \Omega} g_1(x, \alpha_1, \alpha_2) \psi \\
\int_\Omega \nabla \alpha_2 \nabla \psi + \int_\Omega c_2(x)\alpha_2 \psi &\leq \int_\Omega f_2(x, \alpha_1, \alpha_2) \psi + \int_{\partial \Omega} g_2(x, \alpha_1, \alpha_2) \psi
\end{align*}
For $(\beta_1, \beta_2)$:
\begin{align*}
\int_\Omega \nabla \beta_1 \nabla \psi + \int_\Omega c_1(x)\beta_1 \psi &\leq \int_\Omega f_1(x, \beta_1, \beta_2) \psi + \int_{\partial \Omega} g_1(x, \beta_1, \beta_2) \psi \\
\int_\Omega \nabla \beta_2 \nabla \psi + \int_\Omega c_2(x)\beta_2 \psi &\leq \int_\Omega f_2(x, \beta_1, \beta_2) \psi + \int_{\partial \Omega} g_2(x, \beta_1, \beta_2) \psi
\end{align*}
Now define $\gamma_1 := \max\{\alpha_1, \beta_1\}$ and $\gamma_2 := \max\{\alpha_2, \beta_2\}$.
By the quasimonotonicity condition {\bf (Q)}, we have:
\begin{enumerate}
\item $f_1(x, \alpha_1, \alpha_2) \leq f_1(x, \alpha_1, \gamma_2)$ and $f_1(x, \beta_1, \beta_2) \leq f_1(x, \beta_1, \gamma_2)$
\item $f_2(x, \alpha_1, \alpha_2) \leq f_2(x, \gamma_1, \alpha_2)$ and $f_2(x, \beta_1, \beta_2) \leq f_2(x, \gamma_1, \beta_2)$
\end{enumerate}
Similarly for the boundary terms:
\begin{enumerate}
\item $g_1(x, \alpha_1, \alpha_2) \leq g_1(x, \alpha_1, \gamma_2)$ and $g_1(x, \beta_1, \beta_2) \leq g_1(x, \beta_1, \gamma_2)$
\item $g_2(x, \alpha_1, \alpha_2) \leq g_2(x, \gamma_1, \alpha_2)$ and $g_2(x, \beta_1, \beta_2) \leq g_2(x, \gamma_1, \beta_2)$
\end{enumerate}
Let
\begin{equation*}
h_1(x) := \begin{cases}
f_1(x, \alpha_1, \gamma_2) & \text{if } \alpha_1(x) > \beta_1(x) \text{ a.e. } x \in \Omega \\
f_1(x, \beta_1, \gamma_2) & \text{if } \alpha_1(x) \leq \beta_1(x) \text{ a.e. } x \in \Omega
\end{cases}
\end{equation*}
\begin{equation*}
\widetilde{h}_1(x) := \begin{cases}
g_1(x, \alpha_1, \gamma_2) & \text{if } \alpha_1(x) > \beta_1(x) \text{ a.e. } x \in \partial \Omega \\
g_1(x, \beta_1, \gamma_2) & \text{if } \alpha_1(x) \leq \beta_1(x) \text{ a.e. } x \in \partial \Omega
\end{cases}
\end{equation*}
and 
\begin{equation*}
h_2(x) := \begin{cases}
f_2(x, \gamma_1, \alpha_2) & \text{if } \alpha_2(x) > \beta_2(x) \text{ a.e. } x \in \Omega \\
f_2(x, \gamma_1, \beta_2) & \text{if } \alpha_2(x) \leq \beta_2(x) \text{ a.e. } x \in \Omega
\end{cases}
\end{equation*}
\begin{equation*}
\widetilde{h}_2(x) := \begin{cases}
g_2(x, \gamma_1, \alpha_2) & \text{if } \alpha_2(x) > \beta_2(x) \text{ a.e. } x \in \partial \Omega \\
g_2(x, \gamma_1, \beta_2) & \text{if } \alpha_2(x) \leq \beta_2(x) \text{ a.e. } x \in \partial \Omega
\end{cases}
\end{equation*}
Notice that {for $i=1,2, \; \alpha_i$ and $\beta_i$ satisfy 
\begin{equation*}
\int_\Omega \nabla v \nabla \psi + \int_\Omega c_i(x)v \psi \leq \int_\Omega h_i(x) \psi + \int_{\partial \Omega} \widetilde{h}_i(x) \psi
\end{equation*}
}
{Since $(\alpha_i, \beta_i)$ satisfy $\underline{u}_i \le\alpha_i, \beta_i \le \overline{u}_i$ and $\gamma_2 = \max\{\alpha_2, \beta_2\} \in [\underline{u}_2, \overline{u}_2]$, condition {\bf (H1)} gives $|h_i(x)| \le K_i(x) \in L^{\frac{2N}{N+2}}(\Omega)$ and condition {\bf (H2)} gives $|\widetilde{h}_i(x)| \le \widetilde{K}_i(x) \in L^{\frac{2(N-1)}{N}}(\partial\Omega)$, so the hypotheses of Proposition~\ref{prop:main_kato} are satisfied.}
{Therefore, by} {Proposition \ref{prop:main_kato}}, we have that $(\gamma_1, \gamma_2)$ satisfies:
\begin{align*}
\int_\Omega \nabla \gamma_1 \nabla \psi + \int_\Omega c_1(x)\gamma_1 \psi &\leq \int_\Omega h_1(x) \psi + \int_{\partial \Omega} \widetilde{h}_1(x) \psi \\
\int_\Omega \nabla \gamma_2 \nabla \psi + \int_\Omega c_2(x)\gamma_2 \psi &\leq \int_\Omega h_2(x) \psi + \int_{\partial \Omega} \widetilde{h}_2(x) \psi
\end{align*}
By the definitions of $h_1, h_2, \widetilde{h}_1, \widetilde{h}_2$ and the quasimonotonicity condition, we have:
\begin{enumerate}
\item $h_1(x) \leq f_1(x, \gamma_1, \gamma_2)$ and $\widetilde{h}_1(x) \leq g_1(x, \gamma_1, \gamma_2)$
\item $h_2(x) \leq f_2(x, \gamma_1, \gamma_2)$ and $\widetilde{h}_2(x) \leq g_2(x, \gamma_1, \gamma_2)$
\end{enumerate}
Therefore:
\begin{align*}
\int_\Omega \nabla \gamma_1 \nabla \psi + \int_\Omega c_1(x)\gamma_1 \psi &\leq \int_\Omega f_1(x, \gamma_1, \gamma_2) \psi + \int_{\partial \Omega} g_1(x, \gamma_1, \gamma_2) \psi \\
\int_\Omega \nabla \gamma_2 \nabla \psi + \int_\Omega c_2(x)\gamma_2 \psi &\leq \int_\Omega f_2(x, \gamma_1, \gamma_2) \psi + \int_{\partial \Omega} g_2(x, \gamma_1, \gamma_2) \psi
\end{align*}
This shows that $(\gamma_1, \gamma_2) = (\max\{\alpha_1, \beta_1\}, \max\{\alpha_2, \beta_2\})$ is a subsolution of \eqref{pde:system:each}.
\end{proof}
\begin{prop}\label{maxsol}
Suppose that $(u_1, u_2)$ and $(v_1, v_2)$ are two solutions of \eqref{pde:system:each} such that $(\underline{u}_1, \underline{u}_2) \leq (u_1, u_2), (v_1, v_2) \leq (\overline{u}_1, \overline{u}_2)$, where $(\underline{u}_1, \underline{u}_2)$ and $(\overline{u}_1, \overline{u}_2)$ are sub- and supersolutions of \eqref{pde:system:each}, respectively. Then $(\max\{u_1, v_1\}, \max\{u_2, v_2\})$ is a subsolution of \eqref{pde:system:each}.
\end{prop}
\begin{proof}
Since $(u_1, u_2)$ and $(v_1, v_2)$ are solutions of \eqref{pde:system:each}, they satisfy
\begin{align*}
\int_\Omega \nabla u_i \nabla \psi + \int_\Omega c_i(x)u_i \psi &= \int_\Omega f_i(x, u_1, u_2) \psi + \int_{\partial \Omega} g_i(x, u_1, u_2) \psi, \\
\int_\Omega \nabla v_i \nabla \psi + \int_\Omega c_i(x)v_i \psi &= \int_\Omega f_i(x, v_1, v_2) \psi + \int_{\partial \Omega} g_i(x, v_1, v_2) \psi,
\end{align*}
for all $\psi \in H^1(\Omega)$ and $i = 1, 2$.
Let $\gamma_1 := \max\{u_1, v_1\}$ and $\gamma_2 := \max\{u_2, v_2\}$. 
By the quasimonotonicity condition {\bf(Q)}, we have
\begin{align*}
f_1(x, u_1, u_2) &\leq f_1(x, u_1, \gamma_2), \quad & 
g_1(x, u_1, u_2) &\leq g_1(x, u_1, \gamma_2), \\
f_1(x, v_1, v_2) &\leq f_1(x, v_1, \gamma_2), \quad & 
g_1(x, v_1, v_2) &\leq g_1(x, v_1, \gamma_2), \\
f_2(x, u_1, u_2) &\leq f_2(x, \gamma_1, u_2), \quad & 
g_2(x, u_1, u_2) &\leq g_2(x, \gamma_1, u_2), \\
f_2(x, v_1, v_2) &\leq f_2(x, \gamma_1, v_2), \quad & 
g_2(x, v_1, v_2) &\leq g_2(x, \gamma_1, v_2).
\end{align*}
Define the composite functions as follows
\begin{equation*}
\widetilde{f}_1(x) := \begin{cases}
f_1(x, u_1, \gamma_2) & \text{if } u_1(x) > v_1(x) \text{ a.e. } x \in \Omega \\
f_1(x, v_1, \gamma_2) & \text{if } u_1(x) \leq v_1(x) \text{ a.e. } x \in \Omega
\end{cases}
\end{equation*}
\begin{equation*}
\widetilde{g}_1(x) := \begin{cases}
g_1(x, u_1, \gamma_2) & \text{if } u_1(x) > v_1(x) \text{ a.e. } x \in \partial \Omega \\
g_1(x, v_1, \gamma_2) & \text{if } u_1(x) \leq v_1(x) \text{ a.e. } x \in \partial \Omega
\end{cases}
\end{equation*}
\begin{equation*}
\widetilde{f}_2(x) := \begin{cases}
f_2(x, \gamma_1, u_2) & \text{if } u_2(x) > v_2(x) \text{ a.e. } x \in \Omega \\
f_2(x, \gamma_1, v_2) & \text{if } u_2(x) \leq v_2(x) \text{ a.e. } x \in \Omega
\end{cases}
\end{equation*}
\begin{equation*}
\widetilde{g}_2(x) := \begin{cases}
g_2(x, \gamma_1, u_2) & \text{if } u_2(x) > v_2(x) \text{ a.e. } x \in \partial \Omega \\
g_2(x, \gamma_1, v_2) & \text{if } u_2(x) \leq v_2(x) \text{ a.e. } x \in \partial \Omega
\end{cases}
\end{equation*}
Since $u_1$ and $v_1$ are solutions, they satisfy the subsolution inequalities
\begin{equation*}
\int_\Omega \nabla w \nabla \psi + \int_\Omega c_1(x)w \psi \leq \int_\Omega \widetilde{f}_1(x) \psi + \int_{\partial \Omega} \widetilde{g}_1(x) \psi
\end{equation*}
for $w = u_1, v_1$.
{Since $u_i, v_i \in [\underline{u}_i, \overline{u}_i]$ and $\gamma_2 \in 
[\underline{u}_2, \overline{u}_2]$, condition {\bf (H1)} gives 
$|\widetilde{f}_i(x)| \le K_i(x) \in L^{\frac{2N}{N+2}}(\Omega)$ and condition \bf (H2)} 
gives $|\widetilde{g}_i(x)| \le \widetilde{K}_i(x) \in L^{\frac{2(N-1)}{N}}(\partial\Omega)$, 
so the hypotheses of Proposition~\ref{prop:main_kato} are satisfied. Therefore, by Proposition \ref{prop:main_kato} (Extended Kato's inequality), $\gamma_1 = \max\{u_1, v_1\}$ satisfies
\begin{equation*}
\int_\Omega \nabla \gamma_1 \nabla \psi + \int_\Omega c_1(x)\gamma_1 \psi \leq \int_\Omega \widetilde{f}_1(x) \psi + \int_{\partial \Omega} \widetilde{g}_1(x) \psi
\end{equation*}
By the quasimonotonicity conditions and the definitions of $\widetilde{f}_1$ and $\widetilde{g}_1$, we have that
\begin{equation*}
\widetilde{f}_1(x) \leq f_1(x, \gamma_1, \gamma_2) \quad \text{and} \quad \widetilde{g}_1(x) \leq g_1(x, \gamma_1, \gamma_2)
\end{equation*}
Similarly for $\gamma_2$. Therefore,
\begin{align*}
\int_\Omega \nabla \gamma_1 \nabla \psi + \int_\Omega c_1(x)\gamma_1 \psi &\leq \int_\Omega f_1(x, \gamma_1, \gamma_2) \psi + \int_{\partial \Omega} g_1(x, \gamma_1, \gamma_2) \psi \\
\int_\Omega \nabla \gamma_2 \nabla \psi + \int_\Omega c_2(x)\gamma_2 \psi &\leq \int_\Omega f_2(x, \gamma_1, \gamma_2) \psi + \int_{\partial \Omega} g_2(x, \gamma_1, \gamma_2) \psi
\end{align*}
This shows that $(\gamma_1, \gamma_2) = (\max\{u_1, v_1\}, \max\{u_2, v_2\})$ is a subsolution of \eqref{pde:system:each}.
\end{proof}

Next, we show that if $(u_1, u_2)$ and $(v_1, v_2)$ are two supersolutions of \eqref{pde:system:each}, then the pair of functions $(\min\{u_1, v_1\}, \min\{u_2, v_2\})$ is a supersolution of \eqref{pde:system:each}. For that purpose, consider the equations
\begin{equation}\label{eq:super_I}
\left\{
	\begin{array}{rcll}
    -\Delta u_1 + c_1(x)u_1 &=& f_1(x, u_1, \widehat{u}_2) & \text{in} \; \Omega,\\
    \dfrac{\partial u_1}{\partial \eta} &=& g_1(x, u_1, \widehat{u}_2) & \text{on}\; \partial\Omega,
    \end{array}
	\right.
\end{equation}
and
\begin{equation}\label{eq:super_II}
 \left\{
	\begin{array}{rcll}
    -\Delta u_2 + c_2(x)u_2 &=& f_2(x, \widehat{u}_1, u_2) & \text{in} \; \Omega,\\
    \dfrac{\partial u_2}{\partial \eta} &=& g_2(x, \widehat{u}_1, u_2) & \text{on}\; \partial\Omega.
    \end{array}
	\right.
\end{equation}
Let $\mathcal{B}$ be the set consisting of $(w_1,w_2) \in (H^1(\Omega))^2$ such that there exists a supersolution $(\widehat{u}_1, \widehat{u}_2)$ of \eqref{pde:system:each} satisfying
\begin{equation*}
(\underline{u}_1, \underline{u}_2) \le (w_1,w_2) \le (\widehat{u}_1, \widehat{u}_2) \le (\overline{u}_1, \overline{u}_2),
\end{equation*}
where $w_1$ and $w_2$ are solutions of \eqref{eq:super_I} and \eqref{eq:super_II}, respectively, for the pair $(\widehat{u}_1, \widehat{u}_2)$.
\begin{lem}
Let $(\alpha_1, \alpha_2)$ and $(\beta_1, \beta_2) \in \mathcal{B}$ be any two supersolutions of \eqref{pde:system:each}. Then the pair $(\min\{\alpha_1, \beta_1\}, \min\{\alpha_2, \beta_2\})$ is a supersolution of \eqref{pde:system:each}.
\end{lem}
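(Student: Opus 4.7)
The plan is to prove this lemma by mirroring the argument used in Lemma \ref{lem:5.1}, but in the reversed direction, replacing Proposition \ref{prop:main_kato} with its minimum counterpart, Corollary \ref{cor}. First, I would write down the two supersolution inequalities satisfied by $(\alpha_1,\alpha_2)$ and $(\beta_1,\beta_2)$ (these are guaranteed by the fact that they belong to $\mathcal{B}$). Then I would set $\gamma_1 := \min\{\alpha_1,\beta_1\}$ and $\gamma_2 := \min\{\alpha_2,\beta_2\}$ and organize the argument to show that $(\gamma_1,\gamma_2)$ satisfies the two supersolution inequalities for \eqref{pde:system:each}.

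The next step is to exploit the quasimonotonicity condition \textbf{(Q)} in the opposite direction from Lemma \ref{lem:5.1}. Since $\gamma_2 \le \alpha_2$ and $\gamma_2 \le \beta_2$, condition \textbf{(Q)} gives
\[
f_1(x,\alpha_1,\alpha_2) \ge f_1(x,\alpha_1,\gamma_2), \qquad f_1(x,\beta_1,\beta_2) \ge f_1(x,\beta_1,\gamma_2),
\]
and similarly for $g_1$. Likewise, since $\gamma_1 \le \alpha_1$ and $\gamma_1 \le \beta_1$, we get
\[
f_2(x,\alpha_1,\alpha_2) \ge f_2(x,\gamma_1,\alpha_2), \qquad f_2(x,\beta_1,\beta_2) \ge f_2(x,\gamma_1,\beta_2),
\]
and analogously for $g_2$. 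Substituting these bounds into the supersolution inequalities for $(\alpha_1,\alpha_2)$ and $(\beta_1,\beta_2)$ reveals that $\alpha_1$ and $\beta_1$ are (scalar) supersolutions of a linear problem with coefficient $c_1(x)$ and right-hand side depending on $\gamma_2$, and an analogous statement holds for $\alpha_2$ and $\beta_2$ with $\gamma_1$ in the second argument.

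At that point, I would define the piecewise composite functions
\[
h_1(x) := \begin{cases} f_1(x,\alpha_1,\gamma_2) & \text{if } \alpha_1(x) \le \beta_1(x), \\ f_1(x,\beta_1,\gamma_2) & \text{if } \alpha_1(x) > \beta_1(x), \end{cases}
\]
and $\widetilde{h}_1$ in the same way with $g_1$ in place of $f_1$; symmetrically for $h_2$ and $\widetilde{h}_2$ with the roles of the first and second arguments swapped. By construction $\alpha_1$ and $\beta_1$ are both supersolutions of the scalar linear problem driven by $h_1,\widetilde{h}_1$. Applying Corollary \ref{cor} (the minimum version of Kato's inequality) to $\alpha_1$ and $\beta_1$ yields
\[
\int_\Omega \nabla \gamma_1 \nabla \psi + \int_\Omega c_1(x)\gamma_1 \psi \ge \int_\Omega h_1(x) \psi + \int_{\partial \Omega} \widetilde{h}_1(x) \psi
\]
for all $0\le \psi \in H^1(\Omega)$, and analogously for $\gamma_2$ with $h_2,\widetilde{h}_2$.

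Finally, the cases in the definition of $h_1$ are arranged so that the chosen first argument agrees with $\gamma_1$ on each region; combined with \textbf{(Q)}, this gives $h_1(x) \le f_1(x,\gamma_1,\gamma_2)$ and $\widetilde{h}_1(x) \le g_1(x,\gamma_1,\gamma_2)$ a.e., and the same for the index $2$. Plugging these pointwise bounds into the inequalities produced by Corollary \ref{cor} yields the required supersolution inequalities for $(\gamma_1,\gamma_2)$. The only delicate point, and the place where I would be most careful, is keeping the inequality directions straight and verifying that Corollary \ref{cor} applies in the required sense, since in the present setting both inequalities (the supersolution one and the quasimonotone one) point the opposite way compared with the proof of Lemma \ref{lem:5.1}; no new analytical ingredient is needed beyond Corollary \ref{cor} and condition \textbf{(Q)}.
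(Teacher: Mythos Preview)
Your approach is correct and matches the paper's own proof exactly: the paper simply says to rerun the argument of Lemma~\ref{lem:5.1} with $\min$ in place of $\max$, reversed inequalities, swapped case conditions, and Corollary~\ref{cor} in place of Proposition~\ref{prop:main_kato}. One small slip to fix in the write-up: in the final step you need $h_i(x)\ge f_i(x,\gamma_1,\gamma_2)$ (not $\le$) to pass from $\int\ge\int h_i\psi$ to the supersolution inequality, and in fact by your own construction $h_i(x)=f_i(x,\gamma_1,\gamma_2)$ identically (the chosen argument already equals $\gamma_i$ on each region), so no further appeal to \textbf{(Q)} is needed at that point.
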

\begin{proof}
The proof follows the same structure as Lemma~\ref{lem:5.1}, with the following modifications: (i) supersolution inequalities ($\geq$) replace subsolution inequalities ($\leq$), (ii) $\min$ operations replace $\max$ operations, (iii) the case definitions in the composite functions use conditions of the form ``$\alpha_1(x) < \beta_1(x)$'' instead of ``$\alpha_1(x) > \beta_1(x)$'', and (iv) we apply the minimum property from Corollary~\ref{cor} instead of the maximum property from Proposition~\ref{prop:main_kato}. The proof is otherwise analogous.
\end{proof}
\begin{prop}
\label{minsol}
Suppose that $(u_1, u_2)$ and $(v_1, v_2)$ are two solutions of \eqref{pde:system:each} such that $(\underline{u}_1, \underline{u}_2) \leq (u_1, u_2), (v_1, v_2) \leq (\overline{u}_1, \overline{u}_2)$, where $(\underline{u}_1, \underline{u}_2)$ and $(\overline{u}_1, \overline{u}_2)$ are sub- and supersolutions of \eqref{pde:system:each}, respectively. Then $(\min\{u_1, v_1\}, \min\{u_2, v_2\})$ is a supersolution of \eqref{pde:system:each}.
\end{prop}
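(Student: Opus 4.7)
The plan is to mirror the proof of Proposition~\ref{maxsol}, exchanging $\max$ with $\min$ and invoking Corollary~\ref{cor} in place of Proposition~\ref{prop:main_kato}. I would first set $\delta_1 := \min\{u_1, v_1\}$ and $\delta_2 := \min\{u_2, v_2\}$. Since $(u_1,u_2)$ and $(v_1,v_2)$ are weak solutions, for each $i=1,2$ both $u_i$ and $v_i$ satisfy the identity $\int_\Omega(\nabla w\nabla\psi + c_i(x) w\psi) = \int_\Omega f_i(x,\cdot,\cdot)\psi + \int_{\partial\Omega} g_i(x,\cdot,\cdot)\psi$ with the respective arguments, which in particular gives the supersolution inequality ``$\ge$'' needed to invoke Corollary~\ref{cor}. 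The required integrability of $f_i(\cdot,\delta_1,\delta_2)$ and $g_i(\cdot,\delta_1,\delta_2)$ follows from the sandwiching $(\underline{u}_1,\underline{u}_2) \le (\delta_1,\delta_2) \le (\overline{u}_1,\overline{u}_2)$ and the integrability already established for the barriers.

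Focusing first on the component $i=1$, I would apply Corollary~\ref{cor} to $u_1$ and $v_1$ (with the data $f_1(x,u_1,u_2), f_1(x,v_1,v_2)$ in $\Omega$ and $g_1(x,u_1,u_2), g_1(x,v_1,v_2)$ on $\partial\Omega$) to obtain that $\delta_1$ satisfies a supersolution inequality with piecewise right-hand sides $F_1,G_1$ defined in the natural way according to which of $u_1,v_1$ realizes the minimum on each set. The crucial step is then to absorb $(u_2,v_2)$ into $\delta_2$ using the quasimonotonicity hypothesis \textbf{(Q)}: since $\delta_2 \le u_2$ and $\delta_2 \le v_2$, on the set where $\delta_1 = u_1$ we have $f_1(x,u_1,u_2) \ge f_1(x,u_1,\delta_2) = f_1(x,\delta_1,\delta_2)$, and symmetrically on the set where $\delta_1 = v_1$. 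Hence $F_1 \ge f_1(x,\delta_1,\delta_2)$ a.e.\ in $\Omega$, and the same argument on $\partial\Omega$ yields $G_1 \ge g_1(x,\delta_1,\delta_2)$ a.e.\ on $\partial\Omega$.

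Chaining these inequalities gives, for every $0 \le \psi \in H^1(\Omega)$,
\[
\int_\Omega(\nabla\delta_1\nabla\psi + c_1(x)\delta_1\psi) \ge \int_\Omega F_1\,\psi + \int_{\partial\Omega} G_1\,\psi \ge \int_\Omega f_1(x,\delta_1,\delta_2)\,\psi + \int_{\partial\Omega} g_1(x,\delta_1,\delta_2)\,\psi.
\]
Repeating the identical argument with the second component, now exploiting that $\delta_1 \le u_1$ and $\delta_1 \le v_1$ together with the monotonicity of $f_2$ and $g_2$ in their first argument, yields the analogous inequality for $\delta_2$. This shows that $(\delta_1,\delta_2) = (\min\{u_1,v_1\},\min\{u_2,v_2\})$ is a weak supersolution of \eqref{pde:system:each}.

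The main technical obstacle, as in Proposition~\ref{maxsol}, is handling the bookkeeping of the piecewise right-hand sides delivered by the Kato-type corollary and verifying that \textbf{(Q)} suffices to replace them with the clean nonlinearities evaluated at $(\delta_1,\delta_2)$; care is needed because quasimonotonicity is asserted only in the \emph{other} variable of each $f_i, g_i$, so the reduction to $\delta_2$ in the equation for $\delta_1$ (and vice versa) is exactly what the hypothesis is tailored for. Otherwise, the proof is a direct dualization of Proposition~\ref{maxsol} under the reflection $u \mapsto -u$ already used to derive Corollary~\ref{cor} from Proposition~\ref{prop:main_kato}.
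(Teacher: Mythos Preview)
Your proposal is correct and follows essentially the same approach as the paper: dualize Proposition~\ref{maxsol} by swapping $\max$ for $\min$, reversing inequalities, and invoking Corollary~\ref{cor} in place of Proposition~\ref{prop:main_kato}. The only cosmetic difference is that the paper (in the proof of Proposition~\ref{maxsol}) first applies quasimonotonicity to upgrade the data to involve $\gamma_2$ and then invokes Kato, whereas you apply Kato first with the original data and then use \textbf{(Q)} to bound the resulting piecewise right-hand sides; both orderings lead to the same conclusion.
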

\begin{proof}
This combines the modifications from the previous lemma (supersolution inequalities, $\min$ operations, minimum property) with the adjustment from Proposition \ref{maxsol} (starting with solution equalities rather than supersolution inequalities). The proof follows analogously to the previous results.
\end{proof}

\section*{Acknowledgment} 

This collaboration took place as part of the Women in Analysis (WoAN) workshop, hosted by the Banff International Research Station (BIRS) in May 2025, workshop 25w5452. We thank BIRS for its support and hospitality. 
The first author was supported by the AMS-Simons Travel Award and AWM-NSF mentoring travel grant and the third author  was  supported by the AMS-Simons Research Enhancement Grant for PUI Faculty.

We are indebted to the anonymous referee for a careful reading of the manuscript
and  for the valuable comments and suggestions.\\

\noindent {\bf Author Contributions}

S.B., B.D., N.M., and M.O. wrote the main manuscript text. All authors reviewed the manuscript.\\

\noindent {\bf Data availability} 

No datasets were generated or analyzed during the current study.\\

\section*{Declarations}
{\bf Conflict of interest}: The authors declare no conflict of interest.


\end{document}